\newtheorem{thm}{Theorem}[section]
\newtheorem{cor}[thm]{Corollary}
\newtheorem{prop}[thm]{Proposition}
\theoremstyle{definition}
\newtheorem{defn}[thm]{Definition}
\theoremstyle{remark}
\newtheorem{rem}[thm]{Remark}
\newtheorem{ex}[thm]{Example}
\numberwithin{equation}{section}
\theoremstyle{definition}
\newcommand{\inte}{\textrm{int}}
\def\R{{\mathbb R}}
	\title[Proof of Kakutani's Fixed Point Theorem]{Combinatorial Proof of Kakutani's Fixed Point Theorem}
	\author[Yitzchak Shmalo]{Yitzchak Shmalo}
	 \email{yitzchak.shmalo@mail.yu.edu}
	 \keywords{Fixed Points; Sperner's Lemma; Cubical Complex; Brouwer; Kakutani.}
\thanks{This research was partially supported by NSF grant DMS-1700154.}
\begin{document}
	
		\maketitle

	

\begin{abstract}
Kakutani's fixed point theorem is a generalization of Brouwer's fixed point theorem to upper semicontinuous multivalued maps and is used extensively in game theory and other areas of economics. Earlier works  have shown that Sperner's lemma implies Brouwer's theorem. In this paper, a new combinatorial labeling lemma, generalizing Sperner's original lemma, is given and is used to derive a simple proof for Kakutani's fixed point theorem. The proof is constructive and  can be easily applied to numerically approximate the location of fixed points. The main method of the proof is also used to obtain a  generalization of Kakutani's theorem for discontinuous maps which are locally gross direction preserving.
\end{abstract}


	\section{Introduction}

It is well known that Brouwer's fixed point theorem, which states that every continuous map from an $n$-dimensional closed ball to itself has a fixed point, can be proved using Sperner's lemma. This lemma asserts the existence of completely labeled simplices -- simplices that carry all labels from a prescribed collection of labels -- in a triangulation of a simplex when certain labeling conditions are satisfied.
When Sperner's lemma is used in the context of Brouwer's fixed point theorem, one can produce a labeling  that encodes information on the continuous map, and which satisfies the conditions required by Sperner's lemma.
This labeling can be used to prove the existence of fixed points, and to find numerical approximations of them, up
to any desired margin of error. 
More precisely,  completely labeled simplices in a triangulation of a simplex provide approximations of fixed points and prove their existence.
 
Kakutani's fixed point theorem is a generalization of Brouwer's theorem, for the case of  upper semicontinuous multivalued maps; however, it is more challenging to prove Kakutani's theorem from Sperner's lemma, as was recently shown \cite{LM}. The difficulty in using Sperner's lemma to prove Kakutani's theorem lies in the fact that,  in the case of multivalued upper semicontinuous maps, completely labeled simplices do not provide enough information for proving the existence of fixed points or approximating their location. In \cite{LM}, the author tried various ways of encoding the information on multivalued upper semicontinuous maps so that completely labeled simplices approximate fixed points, however, those approaches do not seem to work.

The original proof given by Kakutani \cite{KF} used Brouwer's fixed point theorem to argue for the existence of a sequence of fixed points of certain single-valued maps constructed out of the multivalued map. These sequences converge to the desired fixed point. Thus, Sperner's lemma can be used to find the fixed points of single-valued maps and they, in turn, can be used to find the fixed point of the multivalued map. Alternatively, we can approximate an upper semicontinuous multivalued map  with single-valued continuous functions; such approximations are shown to exist by  von Neumann's approximation lemma \cite{von1971model}. The fixed points of the single-valued maps can be shown to approximate the fixed points of the multivalued maps. Thus, using Sperner's lemma to approximate the location of  fixed points of  single valued maps we can prove the existence of   fixed points of multivalued maps. The technicalities of proving Kakutani's theorem directly from Sperner's lemma, using some variant of the methods described above, are given in \cite{LM,YT}, but both proofs have their drawbacks. In essence, both proofs require the approximation of an upper semicontinuous  multivalued map by a continuous single valued map, which in practice can be a complicated thing to do. They also indirectly invoke Brouwer's fixed point theorem, and using either method to numerically find fixed points would be computationally expensive. In short, the proofs rely on Sperner's lemma only indirectly, and do not achieve that completely labeled simplices are approximations of fixed points of upper semicontinuous multivalued  maps.

This paper provides  a new labeling lemma, which is a generalization of Sperner's original lemma,  in order to prove Kakutani's theorem and other generalizations of the theorem. In Section \ref{pre}, we introduce Sperner's lemma and Brouwer's fixed point theorem. In Section \ref{cubicle} we introduce a couple generalizations of Sperner's lemma, based on the Brouwer degree of maps, including a generalization which we call the hyperplane labeling lemma.
In Section \ref{kakutani}, the hyperplane labeling lemma is used to prove Kakutani's theorem in the case of a mapping from an $n$-dimensional cube to its power set. The proof is done using Cartesian coordinates. Also, a generalization of Kakutani's theorems is given, using the concept of locally gross direction preserving maps, which are maps that can be discontinuous in the normal sense. We prove that such maps have the fixed point property. Thus, the the hyperplane labeling lemma provides a combinatorial proof for the known result that every locally gross direction preserving map has a fixed point. For more information on the topic, see \cite{JEANJACQUESHERINGS200889, CROMME19971527, He2004, Termwuttipong2010}.   

The key idea of the hyperplane labeling lemma is that, unlike Sperner's lemma, it requires certain conditions for the labeling inside of a polytope, and not only on the boundary. This seems unavoidable when dealing with Kakutani's theorem. More specifically, we take a multivalued map of a $d$-dimensional polytope to itself. We then decompose the polytope into smaller polytopes, and label the vertices of the smaller polytopes. The labeling encodes information regarding the multivalued map. Because of the conditions for the labeling of vertices in the interior of the polytope, more information of the map is, essentially, preserved by the labeling. Thus, unlike with the Sperner lemma, every completely labeled small polytope in the decomposition  can be used to  prove the existence of a fixed point. As mentioned, this proof does not rely on Brouwer's fixed point theorem and does not require approximating the multivalued map by a single-valued map. 

In applications, multivalued maps have been used in areas of game theory, optimal control and partial deferential equations and recently they have also been used as a numerical tool for approximating single-valued maps, see \cite{Kaczynski2008}. Such numerical tools are helpful in the areas of dynamical systems and provide computer assisted proofs for problems concerning single-valued maps. Often in chaos theory it is complicated to try and evaluate a single-valued function perfectly and therefore it might be easier to study an arbitrarily close approximation of the single-valued map via a multivalued map, see \cite{10.2307/2585169,Kaczynski2008}. Thus, it is convenient to have a simple combinatorial proof of Kakutani's fixed point theorem, one which can numerically approximate the location of fixed points.

	\section{Preliminaries}\label{pre}
	
	\begin{defn}[Sperner Labeling]
		Consider the standard $n$-dimensional simplex $T=\textrm{conv}(0, e_1,e_2,\ldots ,e_n)\subset \mathbb{R}^n$, where we denote by $(e_1,e_2,\ldots ,e_{n})$ the standard basis of $\mathbb{R}^n$, by $\textrm{conv}(\cdot)$ the convex hull of a set and by $\{v_0,v_1,v_2,\ldots ,v_{n}\}$ the vertices of $T$.  A triangulation $\{T_i\}_{i\in I}$ of the simplex is a subdivision of the simplex into smaller simplices such that for any two simplices in the subdivision,
$T_i$, $T_j$, the intersection $T_i\cap T_j$ is either empty or a face of both $T_i$ and $T_j$. If $K$ is a subset of $T$, the carrier $\textrm{car}(K)$ of $K$ is the smallest face of $T$  that contains $K$.	

		A Sperner labeling of the triangulation is a function $\phi$ that assigns to each vertex $v$ of  the triangulation a number $\phi(x)\in\{0,1,\ldots n\}$ such that:
		\begin{itemize}
			\item [(i)] each vertex $v$ of $T$ is assigned a distinct  label;  no two vertices are assigned the same label;
			\item [(ii)] each vertex $v$ of $\{T_i\}_{i\in I}$ is assigned one of the labels  of the vertices of $T$ in the $\textrm{car}(v)$.
		\end{itemize}

	We say that $T_i$ is fully labeled, or completely labeled, if its vertices are labeled ${0,1,2, \ldots, n}$.	
	\end{defn}
	
Note that condition (ii) of the Sperner labeling is vacuously satisfied for any choice of labels of a simplex $T_i$  in the case when no vertex of $T_i$ lies on a face of $T$.
	
	\begin{thm}[Sperner's Lemma \cite{Sperner1928}] Every triangulation of $T$ labeled in a Sperner manner contains a completely labeled $T_i$.
	\end{thm}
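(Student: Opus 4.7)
The plan is to prove Sperner's lemma by induction on the dimension $n$, using the classical \emph{door-counting} parity argument. In fact it is cleaner to prove the slightly stronger statement that the number of completely labeled small simplices is \emph{odd}, since the inductive step needs to reuse the conclusion on a boundary face of $T$.

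For the base case $n=1$, the simplex $T$ is an interval with endpoints labeled $0$ and $1$. Reading off the labels of the subdivision vertices from $v_0$ to $v_1$ in order, the sequence begins with $0$ and ends with $1$, so the number of consecutive pairs of the form $(0,1)$ or $(1,0)$ — which are exactly the completely labeled sub-intervals — is odd, and in particular nonzero.

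For the inductive step, assume the odd-count version in dimension $n-1$, and let $\{T_i\}_{i\in I}$ be a Sperner-labeled triangulation of the $n$-simplex $T$. Call an $(n-1)$-face of some $T_i$ \emph{almost completely labeled} (ACL) if its $n$ vertices carry exactly the labels $\{0,1,\ldots,n-1\}$. Build a graph $G$ whose nodes are the small simplices $T_i$ together with an extra node $\star$ representing the exterior of $T$, placing an edge between two nodes whenever they share an ACL face; edges to $\star$ correspond to ACL faces lying on the boundary of $T$. A short case analysis on the multiset of labels of $T_i$ (writing the multiset as $\{0,1,\ldots,n-1,x\}$ and considering which vertex must be deleted to leave the label set $\{0,\ldots,n-1\}$) shows that $T_i$ has $G$-degree $1$ exactly when $T_i$ is completely labeled, has degree $2$ when the labels of $T_i$ are exactly $\{0,\ldots,n-1\}$ with one label repeated, and has degree $0$ otherwise.

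The key inductive input is the computation of $\deg_G(\star)$. Condition (ii) of the Sperner labeling forces any ACL face lying on the boundary of $T$ to sit inside the single face $\textrm{conv}(v_0,\ldots,v_{n-1})$, since this is the only face of $T$ whose vertex labels cover all of $\{0,\ldots,n-1\}$. The triangulation and labeling restricted to this face form a Sperner-labeled triangulation in dimension $n-1$, so by the inductive hypothesis the number of completely labeled $(n-1)$-simplices on this face — equivalently, the number of ACL boundary faces — is odd. The handshake lemma then forces the number of degree-$1$ nodes of $G$ to be odd, which by the case analysis is exactly the number of completely labeled small simplices $T_i$. The main obstacle is conceptual rather than computational: one must recognize that the right inductive invariant is a parity statement, not mere existence, since the step at stage $n$ consumes the odd-count conclusion at stage $n-1$ in order to compute $\deg_G(\star)$. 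Once the parity strengthening is adopted, the remaining pieces — the case analysis on label multisets, the carrier argument confining ACL boundary faces to $\textrm{conv}(v_0,\ldots,v_{n-1})$, and the handshake lemma — are routine finite combinatorics.
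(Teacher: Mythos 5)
Your proof is correct: it is the classical ``door-counting'' parity argument, strengthened to the odd-count version so that the induction closes, with the boundary contribution to the handshake sum localized to the single facet $\textrm{conv}(v_0,\ldots,v_{n-1})$ by the carrier condition (ii). The case analysis on label multisets (degree $1$ iff completely labeled, degree $2$ iff labels are $\{0,\ldots,n-1\}$ with one repeat, degree $0$ otherwise) and the reduction of $\deg_G(\star)$ to the $(n-1)$-dimensional instance are both sound.

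The paper itself does not prove Sperner's lemma; it cites it and then works in a different framework. The relevant machinery in Section~3 is degree-theoretic: a labeling is encoded as a \emph{realization} $f:(P,\partial P)\to(T,\partial T)$, the decomposition is homotoped so that $f$ restricts to a realization on each small piece, and additivity of the Brouwer degree gives $\deg(\partial f)=\sum_i\deg(\partial f^*_i)$, forcing some piece to have nonzero degree and hence to be completely labeled. Sperner's lemma is then the special case where $P$ is a simplex and $\deg(\partial f)=1$ (Theorem~3.2 and the remark at the end of its proof). Your route is strictly more elementary -- no degree theory, no homotopy invariance, no boundary-degree theorem -- and it delivers the odd-count refinement with a purely finite argument. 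The paper's degree-theoretic route costs that machinery but buys generality: it applies to arbitrary convex polytopes, non-simplicial decompositions, and the much weaker ``non-degenerate'' labeling condition, which is essential for the Hyperplane Labeling Lemma (Theorem~3.4) and the subsequent proof of Kakutani's theorem. The door-counting induction is tied to the standard simplex and the Sperner carrier condition and does not extend cleanly to that setting; the two approaches are therefore complementary rather than interchangeable.
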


\begin{figure}
	\includegraphics[width=.7\textwidth]{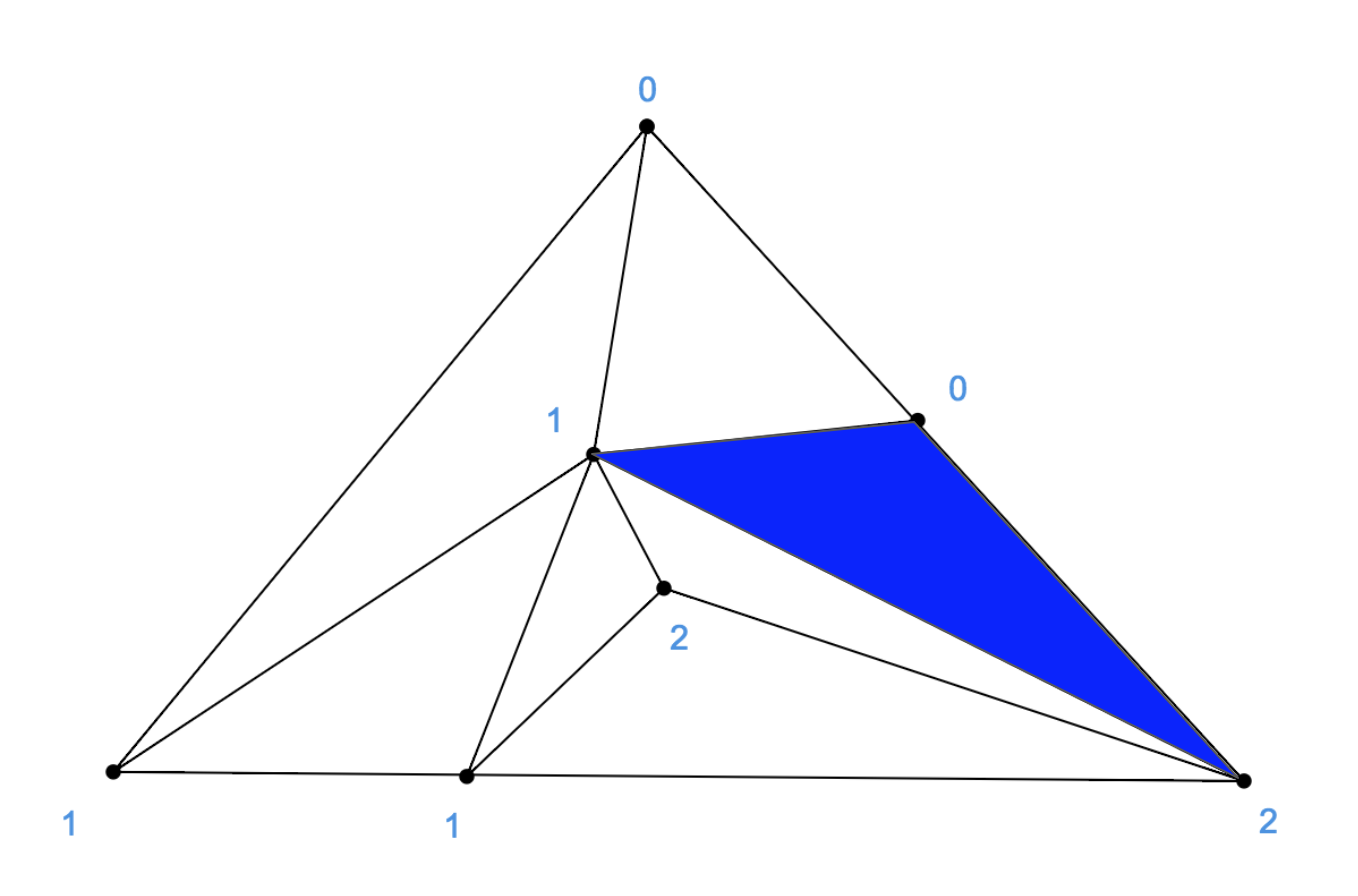}
	\caption{Example of Sperner's lemma.}
	\label{fig:sperner}
\end{figure}

See Figure~\ref{fig:sperner} for an example of Sperner's lemma in two dimensions. As mentioned, Sperner's lemma can be used to derive an elementary proof of the Brouwer fixed point theorem: any continuous map $f:B\to B$ has a fixed point, where $B$ is an $n$-dimensional closed ball. 

\begin{thm}[Brouwer's Fixed Point Theorem \cite{BurnsG2005}]
Let $T$ be an $n$-dimensional simplex and $f:T \mapsto T$ be a continuous function. Then there exists a point $x \in T$ such that $f(x)=x.$
\end{thm}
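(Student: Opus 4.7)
The plan is to convert the continuous map $f$ into a combinatorial labeling of finer and finer triangulations of $T$, apply Sperner's lemma to each to extract a completely labeled simplex, and then pass to a limit using compactness of $T$ and continuity of $f$.

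First, I would work with barycentric coordinates on $T$. Every $x \in T$ can be written uniquely as $x = \sum_{i=0}^{n} \lambda_i(x) v_i$ with $\lambda_i(x) \ge 0$ and $\sum_i \lambda_i(x) = 1$; likewise write $f(x) = \sum_i \mu_i(x) v_i$. Given any triangulation $\{T_i\}_{i\in I}$ of $T$, I would define a labeling $\phi$ on the triangulation vertices as follows: for each vertex $w$, let $S(w) = \{i : \lambda_i(w) > 0\}$ be the support of its barycentric coordinates, which is precisely the index set of $\textrm{car}(w)$. Since $\sum_{i \in S(w)} \lambda_i(w) = 1 = \sum_{i=0}^{n} \mu_i(w) \ge \sum_{i \in S(w)} \mu_i(w)$, a pigeonhole argument forces some $j \in S(w)$ with $\mu_j(w) \le \lambda_j(w)$; set $\phi(w) = j$ for any such $j$. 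This automatically satisfies condition (ii) of the Sperner labeling, and for each vertex $v_k$ of $T$ we have $S(v_k) = \{k\}$, so necessarily $\phi(v_k) = k$, giving condition (i). Hence $\phi$ is a Sperner labeling.

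Next, I would take a sequence of triangulations $\mathcal{T}_m$ of $T$ whose mesh tends to zero as $m \to \infty$ (for instance, iterated barycentric subdivisions). By Sperner's lemma, each $\mathcal{T}_m$ contains a completely labeled simplex with vertices $w_0^{(m)}, \ldots, w_n^{(m)}$ satisfying $\phi(w_j^{(m)}) = j$, i.e., $\mu_j(w_j^{(m)}) \le \lambda_j(w_j^{(m)})$. Since $T$ is compact, I can pass to a subsequence so that $w_0^{(m)} \to x^*$ for some $x^* \in T$; because the simplex diameters tend to zero, every $w_j^{(m)}$ converges to the same $x^*$. By continuity of $f$ and of the barycentric coordinate functions, the inequalities persist in the limit: $\mu_j(x^*) \le \lambda_j(x^*)$ for every $j$. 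Summing over $j$ gives $1 \le 1$, forcing every inequality to be an equality, so $f(x^*) = x^*$.

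The main obstacle is ensuring that the labeling respects condition (ii) at vertices lying on proper faces of $T$, since a label chosen outside $S(w)$ would be fatal. The pigeonhole step above is precisely what rescues the construction: it guarantees that a valid label can always be drawn from within the carrier, even though the naive inequality $\mu_j(w) \le \lambda_j(w)$ holds trivially for indices $j \notin S(w)$ and cannot be used there. Beyond this, the only further care needed is in the limit argument, where one must note that the $n+1$ vertices of a shrinking simplex share a common accumulation point; this is immediate from the mesh condition but should be stated explicitly.
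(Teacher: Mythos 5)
The paper states Brouwer's Fixed Point Theorem only as a cited background result and supplies no proof of its own; it merely remarks in the surrounding text that Sperner's lemma can be used to derive it. Your argument is precisely that standard Sperner-lemma derivation: the labeling $\phi(w)$ chosen from the carrier's index set via the pigeonhole inequality $\mu_j(w)\le\lambda_j(w)$ is a valid Sperner labeling, the completely labeled simplices at each mesh scale shrink to a common limit point by compactness, and continuity preserves all $n+1$ inequalities in the limit, which together with $\sum_j\mu_j=\sum_j\lambda_j=1$ force $f(x^*)=x^*$. The proof is correct and complete.
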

	
 We now introduce the notion of upper semicontinuity, which is essential for the formulation of Kakutani's fixed point theorem.
	
	\begin{defn}
		
		 A multivalued map $f :X \mapsto {2}^Y$ is upper semicontinuous if $f(x)$ is compact for all $x \in X$, and for every $x \in X$ and every $\epsilon > 0$, there exists a $\delta > 0$ such that if $z \in N_{\delta}(x)$  then $f(z) \subset N_{\epsilon}(f(x))$.
		
	\end{defn}
	
	Meaning that a multivalued map is upper semicontinuous if the image of every $x \in X$ is compact and (for every $\epsilon$ there exists a $\delta$ such that) the image of every element in the $\delta$ neighborhood around $x$ is a subset of the $\epsilon$ neighborhood around $f(x)$. Notice that this $\delta$ might not work for $x$ itself, meaning that, for the mentioned $\delta$, it is possible that $f(x) \not\subset N_{\epsilon}(f(z))$. There are a couple of other definitions for upper semicontinuity, equivalent to this one, but in this work we will use the above definition exclusively. See \cite{LM,YT} for the specifics of the other definitions.
	
	\begin{thm}[Kakutani's Fixed Point Theorem] Let $T$ be a $d$-dimensional simplex and let $f$ be an upper semicontinuous multivalued map from $T$ to nonempty, convex and compact subsets of $T$. Then, there exists a point $x \in T$ such that $x \in f(x)$.
	\end{thm}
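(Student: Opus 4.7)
The plan is to reduce the theorem to the cubical setting---by an affine bijection we may assume $T=[0,1]^d$, since such a bijection preserves upper semicontinuity as well as convexity and compactness of values---and then to exhibit a fixed point as the limit of a sequence of combinatorial approximations produced by the hyperplane labeling lemma announced in Section~\ref{cubicle}.

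Concretely, I would fix a null sequence $\epsilon_n \downarrow 0$ and, for each $n$, subdivide $[0,1]^d$ into a cubical complex of mesh $\epsilon_n$. At every vertex $v$ of the complex, I choose once and for all a selection $y(v)\in f(v)$ (possible since $f(v)$ is nonempty) and define a label from the set $\{\pm 1,\pm 2,\ldots,\pm d\}$ encoding, for example, the signed coordinate in which the displacement $y(v)-v$ is most prominent, with a fixed lexicographic tie-breaking rule. The self-map condition $f(T)\subset 2^T$ forces the boundary sign constraints on each facet $\{x_i=0\}$ and $\{x_i=1\}$---on these facets the $i$th component of displacement has a prescribed sign---and the hyperplane labeling lemma is tailored so that these boundary constraints, together with the interior hyperplane conditions encoded via the Brouwer degree of the displacement field, guarantee at least one completely labeled small cube in the subdivision. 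From each such cube I extract a point $x_n$ together with vertices $v^{(n)}_1,\ldots,v^{(n)}_{2d}$ lying in an $\epsilon_n$-ball around $x_n$ whose selections realize every signed coordinate label.

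To pass from these approximations to a genuine fixed point I invoke compactness. Since $[0,1]^d$ is compact, a subsequence of $(x_n)$ converges to some $x^*\in T$, and since the diameter of the chosen cubes vanishes, each sequence $(v^{(n)}_k)_n$ also converges to $x^*$. The corresponding selections $y(v^{(n)}_k)$ lie in the compact set $T$, so, passing to a further subsequence, they converge to limits $y^*_k$; upper semicontinuity of $f$ forces $y^*_k\in f(x^*)$ for every $k$. The sign conditions on the labels translate, in the limit, into the assertion that for each coordinate $i$ there exist $y^+,y^-\in f(x^*)$ with $y^+_i\ge x^*_i\ge y^-_i$. Convexity of $f(x^*)$ then permits a convex combination $y\in f(x^*)$ matching $x^*$ in every coordinate simultaneously, i.e.\ $y=x^*$, which gives $x^*\in f(x^*)$.

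The main obstacle will be verifying the hypotheses of the hyperplane labeling lemma for this specific labeling. Unlike Sperner's lemma, the lemma imposes consistency conditions in the interior of $T$, and since the selections $y(v)$ are made with no continuity whatsoever, establishing these conditions calls for a genericity or perturbation argument ruling out degenerate configurations where displacements have vanishing components or tied maxima. A secondary, more conceptual, obstacle is the convex-combination step in the limit: one must ensure that the convex hull of the $y^*_k$ genuinely contains a point matching $x^*$ in every coordinate simultaneously, which is where working with a $2d$-label scheme on a cube (rather than a $(d+1)$-label Sperner scheme on a simplex) is essential, and where the convexity hypothesis on $f(x^*)$ enters in a non-removable way---precisely the step that fails in the naive attempts described in the introduction.
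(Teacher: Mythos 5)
Your labeling scheme diverges from the paper's in a way that breaks the final step of the argument. The paper labels each vertex $z$ with one of $2^d$ labels, one per \emph{orthant} of $\mathbb{R}^d$, recording which orthant contains the displacement $z'-z$ for a choice $z'\in f(z)$. You instead use $2d$ labels $\{\pm 1,\ldots,\pm d\}$ recording only the \emph{most prominent signed coordinate} of the displacement. That is a genuinely weaker piece of information, and in dimension $d\ge 3$ it is too weak for the concluding convexity argument. Concretely, set $x^*=0$ and let
\[
K=\operatorname{conv}\bigl\{\,z^{\pm i}: 1\le i\le d\,\bigr\},\qquad
z^{+i}=e_i+(1-\epsilon)\textstyle\sum_{j\neq i}e_j,\qquad
z^{-i}=-e_i+(1-\epsilon)\textstyle\sum_{j\neq i}e_j,
\]
with $0<\epsilon<1/2$ and $d=3$. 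Each $z^{\pm i}$ is uniquely labeled $\pm i$ under your ``most prominent coordinate'' rule, so the limiting configuration realizes all $2d$ labels with points of $K=f(x^*)$. Yet $\langle(1,1,1),z^{\pm i}\rangle\ge 1-2\epsilon>0$ for all six points, so $K$ lies strictly in the open half-space $\{z_1+z_2+z_3>0\}$ and $0\notin K$. Thus ``for each $i$ there exist $y^{+},y^{-}\in f(x^*)$ with $y^{+}_i\ge x^*_i\ge y^{-}_i$, hence by convexity $x^*\in f(x^*)$'' is simply false: convexity does not let you match all coordinates simultaneously. The $2^d$-orthant scheme is precisely what repairs this: if $0\notin K$ and $a$ is a separating functional, no point of $K$ can lie in the orthant whose signs are $-\operatorname{sign}(a)$, so that orthant's label cannot occur, and a completely labeled small cube really does force a fixed point.

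A second, related gap is that the hyperplane labeling lemma of Section~\ref{cubicle} is stated for a realization $f:P\to P'$ with $\phi$ taking values in the vertex set of $P'$; for your $2d$ labels $P'$ would have to be the $d$-dimensional cross-polytope, and you do not check the degree condition or the $dP'_N$-set condition for that target. More importantly, you omit the paper's \emph{Condition $H$} step entirely: the paper does not merely pick selections $y(v)\in f(v)$ at vertices and invoke a parity lemma, it inspects every $(d-1)$-face of the subdivision, detects ``problematic'' faces whose labels fail the $dC_N$ condition, and uses the separating-hyperplane structure supplied by upper semicontinuity to add interior points to those faces and repair them. That interior repair step is the actual mechanism by which multivaluedness and mere upper semicontinuity are handled, and it is what distinguishes the hyperplane labeling lemma from Sperner's lemma; without it your plan reduces to the naive Sperner approach that the introduction explains cannot work.
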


	 As mentioned, the author of \cite{LM} tried a verity of ways of encoding the information of the multivalued maps onto the labels of a decomposition of $T$ so that Sperner's lemma can be used to prove Kakutani's theorem, but was ultimately unsuccessful. While the author of \cite{YT} was able to find such a labeling, the technicalities of the proof using the labeling are fairly complicated even in two dimensions and applying the proof to approximate the position of fixed points would be difficult.

\section{Polytope Version of Sperner's Lemma}\label{cubicle}

		In this section, we describe a more general Sperner lemma-type of result for convex polytopes (which shall be referred to as polytopes), following \cite{Bekker1995}. We use the result to prove another generalization of Sperner's lemma, Theorem \ref{thm:YS3}, one which can be directly used to prove Kakutani's theorem. For the decomposition of a polytope $P$ into polytopes $\{P_i\}_{i\in I}$ we assign a label $\{1, 2, \ldots, n+1\}$ to the vertices of $P_i$, such that no $(n-1)$-dimensional face of $P$ is labeled with a complete set of labels ($n+1$ different labels). The union of the labels of the vertices of the $P_i$'s contained on a face of $P$ together with the union of the labels of the vertices of that face of $P$ is also not allowed to be a complete set of labels. We refer to such a labeling as a non-degenerate labeling. The criteria that each vertex $v$ of $P$ be assigned a distinct label is also required, but the condition that two vertices of $P$ cannot have the same label will not be required. Notice, this labeling condition is much weaker than the original Sperner labeling condition.
		
			We introduce the following tool which will be used extensively in this work. Let $P$ be a convex $n$-dimensional  polytope. As before, consider the standard $n$-dimensional simplex $T=\textrm{conv}(0, e_1,e_2,\ldots ,e_n)\subset \mathbb{R}^n$. We denote the corresponding vertices of $T$ by $\{a_1,\ldots, a_{n+1}\}$. Given an $n$-dimensional, oriented, convex polytope  $P$, a  labeling  $\phi:P\to\{1,\ldots,n+1\}$, and the standard $n$-simplex $T$ with vertices $a_1,\ldots,a_{n+1}$, a \emph{realization} of $\phi$ is a continuous map $f: P \to T$, satisfying the following conditions:
			\begin{itemize}
				\item[(i)] If $v$ is a vertex of $P$ then $f(v)=a_{\phi(v)}$, i.e., $f(v)$ is the vertex $a_i$ of  $T$ with the index $i$ equal to the label of $v$;
				\item[(ii)] If $S$ is a face of $P$  with vertices $v_1,\ldots,v_k$, then $f(S)\subset \textrm{conv}(a_{\phi(v_1)},\ldots, a_{\phi(v_k)})$.	In general, in order to ensure that $f$ is continuous we map $S$ onto the entire convex hull of its labels.
			\end{itemize}
			Informally, a realization of $P$ is a continuous mapping of $P$ onto $T$ that `wraps' $\partial P$ around $\partial T$, such that the labels of the vertices of $P$ match with the indices $i$ of the vertices of $T$.  Such $f$ is in general non-injective. The reason $f: \partial P \to \partial T$ is because $T$ is a simplex and for any non-degenerate labeling of a face of $P$ with labels from the set $\{1,2,\ldots, n+1\}$ we will have that $f$ maps that face, under the conditions of a realization, to a subset of a face of $T$. Now, for a smooth, boundary preserving map $f:(M,\partial M)\to (N,\partial N)$ between two oriented $n$-dimensional manifolds with boundary, it is known that $\deg(f)=\deg(\partial f)$, where  $\partial f:\partial  M\to\partial N$ is the map induced by $f$ on the boundaries. The degree of the map $f$ can be defined as the signed number of preimages $f^{-1}(p)=\{q_1,\ldots,q_k\}$ of a regular value  $p$ of the map $f$, where each point $q_i$ is counted with a sign $\pm 1$ depending on whether  $df_{q_i}:T_{q_i}M\to T_{p}N$ is orientation preserving or orientation reversing. That is, $\deg(f)=\sum_{q\in f^{-1}(p)}\textrm{sign} (\det (df_{q}))$, where $p\in  N\setminus \partial N$ is a regular value of $f$. The definition of the Brouwer degree extends via homotopy to continuous maps. For us, the fact that $\deg(f)=\deg(\partial f)$ is incredibly useful because it is fairly easy to tell what $\deg(\partial f)$ is, under a realization of a labeling $\phi$, and $\deg(f)$ can tell us a lot about the existence of completely labeled simplices in a decomposition of $P$, as will be illustrated.

			\begin{prop}[\cite{Bekker1995}]\label{prop:Bekker}  Let $P$  be a convex  $n$-dimensional polytope.
				\begin{itemize}
					\item[(i)] Any labeling $\phi$ of some $P$ admits a realization $f$;
					\item[(ii)] Any two realizations of the same labeling are homotopic as maps of pairs $(P,\partial P)\mapsto (T,\partial T)$;
					\item[(iii)] If $\deg(f)=\deg(\partial f) \neq 0$ then $P$ is completely labeled, where $\deg(f)$ is the Brouwer degree of the realization of $\phi$, a labeling of $P$.
				\end{itemize}
			\end{prop}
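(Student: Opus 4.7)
My plan is to prove the three parts in order. For (i), I would construct the realization $f$ by induction on the dimension of the faces of $P$. On the $0$-skeleton I am forced to set $f(v) = a_{\phi(v)}$. Assuming $f$ has been defined continuously on the $(k-1)$-skeleton in such a way that every face $S$ of dimension at most $k-1$, with vertices $v_1,\ldots,v_m$, satisfies $f(S) \subset \textrm{conv}(a_{\phi(v_1)},\ldots,a_{\phi(v_m)})$, I extend over each $k$-face $S$ as follows. The restriction $f|_{\partial S}$ already lands in the convex target $\textrm{conv}(a_{\phi(v_1)},\ldots,a_{\phi(v_m)})$ by the inductive hypothesis applied to each facet of $S$. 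Since this target is convex (hence contractible), $f|_{\partial S}$ extends continuously over $S$ while remaining inside the same convex hull; concretely, pick any interior point $c_S\in S$, set $f(c_S)$ to be, say, the barycenter of the target vertices, and extend radially from $c_S$ out to $\partial S$. The care needed here is consistency on overlapping faces, which is automatic because any two $k$-faces of $P$ meet in a union of faces of strictly lower dimension where $f$ was already pinned down.

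For (ii), I would use the straight-line homotopy $f_t(x) = (1-t) f_0(x) + t f_1(x)$, which is well-defined because $T$ is convex. For any face $S$ of $P$ with label convex hull $C_S = \textrm{conv}(a_{\phi(v_1)},\ldots,a_{\phi(v_m)})$, both $f_0(S)$ and $f_1(S)$ lie in $C_S$, and $C_S$ is convex, so the whole segment from $f_0(x)$ to $f_1(x)$ stays inside $C_S$ for every $x\in S$. Applied to the faces of $\partial P$, this gives $f_t(\partial P)\subset \partial T$ for all $t\in[0,1]$, so $\{f_t\}$ is a valid homotopy in the category of maps of pairs $(P,\partial P)\to(T,\partial T)$.

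For (iii), I would argue by contrapositive. Suppose $P$ is not completely labeled, so some index $i_0\in\{1,\ldots,n+1\}$ is never used by $\phi$. Applying the face condition to $P$ itself, viewed as its own top-dimensional face, shows that $f(P)\subset\textrm{conv}(\{a_j:j\neq i_0\})$, which is the $(n-1)$-dimensional face of $T$ opposite $a_{i_0}$ and in particular sits inside $\partial T$. Thus the image of $f$ misses an open neighborhood of $a_{i_0}$ in the interior of $T$, forcing the Brouwer degree $\deg(f)$ to vanish and contradicting the hypothesis. Overall, the genuinely delicate point in the whole argument is the inductive consistency in (i); parts (ii) and (iii) then fall out easily from the convexity of the target and the elementary properties of the Brouwer degree.
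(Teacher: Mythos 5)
The paper does not actually prove Proposition~\ref{prop:Bekker}; it is stated as a black box cited from \cite{Bekker1995}, so there is no in-paper proof to compare against. Judged on its own merits, your proof is essentially correct and is the standard argument: skeletal induction plus radial coning into a convex target for (i), straight-line homotopy exploiting convexity of each label hull for (ii), and the observation that a missing label confines $f(P)$ to a proper face of $T$, killing the degree, for (iii).

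One point you should make explicit: parts (ii) and (iii) only make sense under the paper's \emph{non-degenerate labeling} hypothesis (no $(n-1)$-face of $P$ carries all $n+1$ labels). Your claim in (ii) that ``this gives $f_t(\partial P)\subset\partial T$'' depends on each facet's label hull $C_S$ being a \emph{proper} face of $T$ and hence lying in $\partial T$; if a facet carried all $n+1$ labels, $C_S=T$ and the inclusion would fail, so $f$ would not even be a map of pairs and $\deg(\partial f)$ would be undefined. This is exactly why the paper introduces the non-degeneracy condition before stating the proposition, and it is worth flagging that (i) holds for \emph{any} labeling while (ii) and (iii) require non-degeneracy. A small stylistic note on (iii): it is cleaner to say $f(P)\subset\partial T$ misses \emph{all} of $\inte T$, so any regular value in $\inte T$ has empty preimage, rather than singling out a neighborhood of $a_{i_0}$ ``in the interior'' (a vertex is not interior). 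Otherwise the argument is sound.
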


			The following is a generalization of Sperner's lemma from \cite{Bekker1995}. We provide the proof for clarification and to ensure that the upcoming proof for Theorem \ref{thm:YS3} is intuitive.

			\begin{thm}[\cite{Bekker1995}]\label{thm:bekker3}
				Assume that $P$ is an $n$-dimensional polytope,  $P=\bigcup_{i\in I}P_i$ is a decomposition of $P$ into polytopes and $\phi:P \cup \bigcup_{i\in I}P_i\to\{1,\ldots,n+1\}$ is a non-degenerate labeling. Let $f$ be a realization of $\phi$, if $\deg(\partial f)\neq 0$ then there exists a polytope $P_i$ that is completely labeled. Also, if the decomposition is into simplicies and $\deg(\partial f)= 1$ then there exist an odd number of completely labeled simplicies $T_i$.
			\end{thm}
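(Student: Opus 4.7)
The plan is to combine Proposition \ref{prop:Bekker} with additivity of the Brouwer degree over the decomposition. First I would construct a single realization $f: P \to T$ that is \emph{compatible} with the decomposition, i.e., such that the restriction $f|_{P_i}$ is itself a realization of the induced labeling $\phi|_{P_i}$ for every $i \in I$. This is done by applying Proposition \ref{prop:Bekker}(i) cell by cell: first fix $f$ on the $0$-skeleton of the decomposition by sending each vertex $v$ to $a_{\phi(v)}$, then extend over each higher-dimensional face of the decomposition by mapping it into the convex hull of the images of its vertices (using the uniqueness-up-to-homotopy statement in Proposition \ref{prop:Bekker}(ii) to ensure the extensions on shared faces agree). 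The resulting $f$ is continuous, realizes $\phi$ on $P$, and simultaneously realizes $\phi|_{P_i}$ on each $P_i$.

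Next I would exploit additivity of the Brouwer degree. Pick a regular value $p \in \inte(T)$ of $f$; by Sard (or by choosing $f$ piecewise linear from the outset) one may also assume $p$ misses $f$ applied to the $(n-1)$-skeleton of the decomposition, so that the finite preimage $f^{-1}(p) = \{q_1,\ldots,q_N\}$ lies in the union of the open interiors $\inte(P_i)$. Each $q_j \in \inte(P_i)$ contributes $\textrm{sign}(\det df_{q_j})$ to both $\deg(f)$ and to $\deg(f|_{P_i})$, so summing the contributions $P_i$ by $P_i$ yields
\[
\deg(f) \;=\; \sum_{i \in I} \deg(f|_{P_i}).
\]
By Proposition \ref{prop:Bekker} applied to $f$, $\deg(f) = \deg(\partial f) \neq 0$ by hypothesis. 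Hence at least one summand $\deg(f|_{P_i})$ is nonzero. Since $f|_{P_i}$ is a realization of $\phi|_{P_i}$, Proposition \ref{prop:Bekker}(iii) applied to $P_i$ yields that $P_i$ is completely labeled, proving the first assertion.

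For the second assertion, assume the $P_i = T_i$ are simplices and $\deg(\partial f) = 1$. If $T_i$ is not completely labeled, then by the realization property $f(T_i)$ is contained in a proper face of $T$ (the convex hull of its labels), so $p \notin f(T_i)$ and $\deg(f|_{T_i}) = 0$. If $T_i$ is completely labeled, then $f|_{T_i}$ sends the $n+1$ vertices of $T_i$ bijectively to the vertices of $T$, and is homotopic (rel boundary) to the affine bijection between these oriented simplices; hence $\deg(f|_{T_i}) = \pm 1$, with sign $+1$ or $-1$ according to whether the labeling respects or reverses the chosen orientation on $T_i$ inherited from $P$. Writing $a$ and $b$ for the number of completely labeled simplices of each sign, the identity $1 = \deg(f) = a - b$ combined with the parity fact that $a+b \equiv a-b \pmod{2}$ gives that $a+b$, the total number of completely labeled simplices, is odd.

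The main obstacle I expect is the bookkeeping around degree additivity: one must ensure that $f$ can be taken regular enough and that the regular value $p$ can be chosen to avoid the image of every $(n-1)$-face of the decomposition. This is standard but technical; the cleanest path is to build $f$ piecewise affinely on a common subdivision, so that each $f|_{P_i}$ is piecewise affine and the set of critical/singular values forms a measure-zero subset of $T$ that can be avoided.
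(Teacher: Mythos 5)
Your overall strategy is the same as the paper's: build a realization of $\phi$ compatible with the decomposition, use additivity of the Brouwer degree to write $\deg(\partial f)=\sum_i\deg(f|_{P_i})$, conclude some summand is nonzero, and invoke Proposition~\ref{prop:Bekker}(iii). The parity argument you give for the simplicial case is also the one the paper alludes to.

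There is, however, a genuine gap in the first step. You claim that the cell-by-cell map (send each vertex of the decomposition to $a_{\phi(v)}$ and extend over cells into convex hulls) ``realizes $\phi$ on $P$.'' That is false in general. Condition (ii) of a realization of $\phi$ on $P$ requires that a face $S$ of $P$ with vertices $v_1,\ldots,v_k$ map into $\mathrm{conv}(a_{\phi(v_1)},\ldots,a_{\phi(v_k)})$, but the non-degenerate labeling hypothesis here is strictly weaker than a Sperner labeling: a vertex $w$ of some $P_i$ lying in the relative interior of $S$ may carry a label not among $\{\phi(v_1),\ldots,\phi(v_k)\}$, and your cell-by-cell map would send $w$ to $a_{\phi(w)}\notin\mathrm{conv}(a_{\phi(v_1)},\ldots,a_{\phi(v_k)})$. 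This is precisely the point where the paper introduces the auxiliary polytope $P^*$, obtained by adjoining to $P$ the extra vertices and faces of the decomposition lying on $\partial P$. The cell-by-cell map is a realization of $\phi$ on $P^*$, not on $P$, and the non-degenerate condition (the totality of labels on any facet $S$ of $P$ still misses at least one label, so the image of $S$ remains in a proper face of $T$) is then used to show that this map is homotopic on $\partial P$ to the given realization of $\phi$ on $P$, hence of the same degree. Your argument implicitly needs this homotopy and should supply it rather than asserting realizations of two different labelings coincide. Relatedly, for $f|_{P_i}$ to be a realization (so that $\partial P_i\to\partial T$ and Proposition~\ref{prop:Bekker}(iii) applies), you also need the reduction ``WLOG no $(n-1)$-face of any $P_i$ carries all $n+1$ labels'' --- if some face does, that $P_i$ is already completely labeled and you are done, but otherwise the restriction of your $f$ to that $P_i$ is not boundary-preserving. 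In the simplicial case this is automatic ($(n-1)$-simplices have only $n$ vertices), which is why your second paragraph does not run into it, but the general polytope case needs it stated.
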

			
			\begin{proof}
				Let $f:P\to T$ be a realization of the labeling $\phi$; the existence of $f$ is ensured by Proposition \ref{prop:Bekker}.
				Suppose that no $(n-1)$-dimensional face of some $P_i$ has $(n+1)$ labels. We transform the polytope $P$ into another polytope $P^*$, homotopically transform the  decomposition  $P=\bigcup_{i\in I}P_i$ into another decomposition  $P^*=\bigcup_{i\in I}P^*_i$, and homotopically transform the map
				$f:P\to T$ into a map $f^*:P^*\to T$ as follows:
				\begin{itemize}
					\item We construct a new  polytope $P^*$ by appending  to the  vertices of $P$ all the vertices of the the $P_i$'s lying on the faces of $P$, and appending to the faces of $P$ all the faces of the $P_i$'s lying on the faces of $P$; the resulting polytope $P^*$  still has the decomposition $P^*=\bigcup_{i\in I}P_i$;
					\item We apply a homotopy deformation to the decomposition $\bigcup_{i\in I}P_i$ and to the realization $f$ to obtain a new decomposition  $P^*=\bigcup_{i\in I}P^*_i$, with the labeling of $P^*_i$ inherited from that of $P_i$, and a new realization  $f^*:P^*\to T$ so that, for every $i$, the restriction of $f^*_{\mid {P^*_i}}$ to $P^*_i$ maps $\partial P^*_i$ to $\partial T$ and is also a realization. The later property ensures that $\deg(f^*_{\mid {P^*_i}})=\deg(\partial f^*_{\mid {\partial P^*_i}})$.
				\end{itemize}

				From the non-degenerate condition which ensures that $f^*$ can be homotopically formed from $f$, we have $\deg(f^*) = \deg(f)$.	Using the addition and homotopy properties of the Brouwer degree, we obtain that \[\deg(\partial f) = \deg(f) = \deg(f^*) = \sum_i \deg(f^*_i) =  \sum_i \deg( \partial f^*_i)\]

				If $\deg(\partial f)\neq 0$, then $\sum_i \deg( \partial f^*_i)\neq 0$, which means that  there exists a polytope $P^*_i$ such that $\deg(\partial f^*_{\mid \partial P^*_i})\neq 0$, hence $\deg(f^*_{\mid P^*_i})\neq 0$. By Proposition \ref{prop:Bekker}, and since the labeling of $P_i$ is the same as the labeling of $P^*_i$, we have that $P_i$ is completely labeled. The intuition behind the proof is as follows. If $P_i$ maps onto $T$ under a realization $f^*$ it means that $P_i$ is completely labeled, otherwise, there would be a vertex of $T$ which would not be in the image of $f^*$. If there is some other realization $f$ of the labeling such that $\deg(\partial f)\neq 0$ then we can form $f^*$ such that $\deg(\partial f^*)\neq 0$, given that the labeling is non-degenerate, with the restriction of $f^*$ to every individual $P_i$ in the decomposition also  a realization. The non-degenerate condition ensures that $f^*$ can homotopically be formed from $f$ and so the degree for both is the same. Thus, there must be one $P_i$ which maps onto $T$ under $f^*$, meaning that $P_i$ is completly labeled. The continuity of $f^*$ follows from the fact that if two polytopes $P_i$ and $P_j$ share a $k$-dimensional face a mapping of that face, under a realization, can be the same for both $P_i$ and $P_j$  and so is continuous. It also follows from the above proof, using the orientation of $f$, that if $\deg(\partial f)= 1$ and the decomposition is into simplicies then there exist an odd number of completely labeled simplicies $T_i$. So, when $P$ is a simplex and $\deg(\partial f)= 1$ we have Sperner's lemma.
			\end{proof}
		
		\begin{figure}
			$\begin{array}{ccc}
			\includegraphics[width=0.3\textwidth]{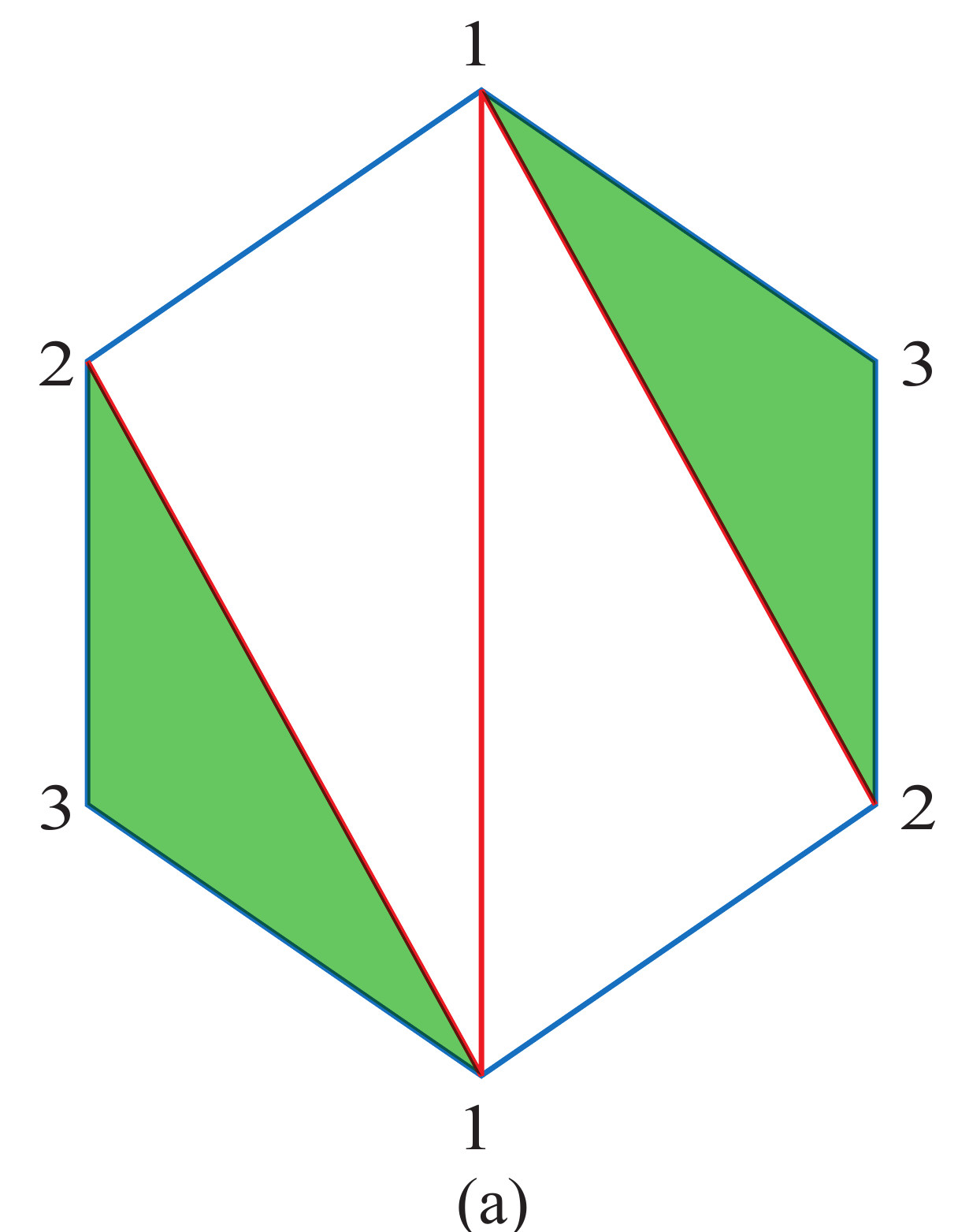}&
			\includegraphics[width=0.3\textwidth]{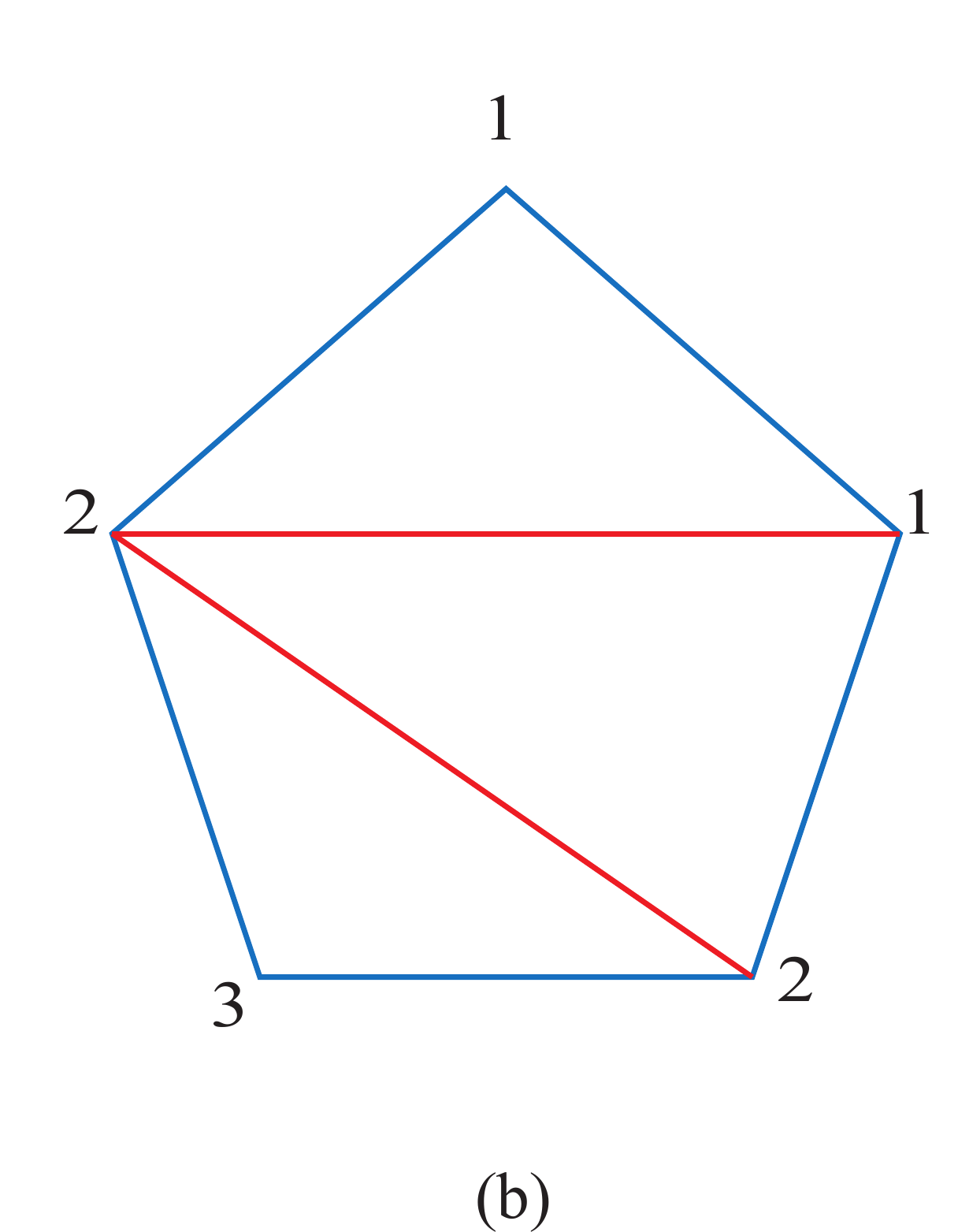}&
			\includegraphics[width=0.3\textwidth]{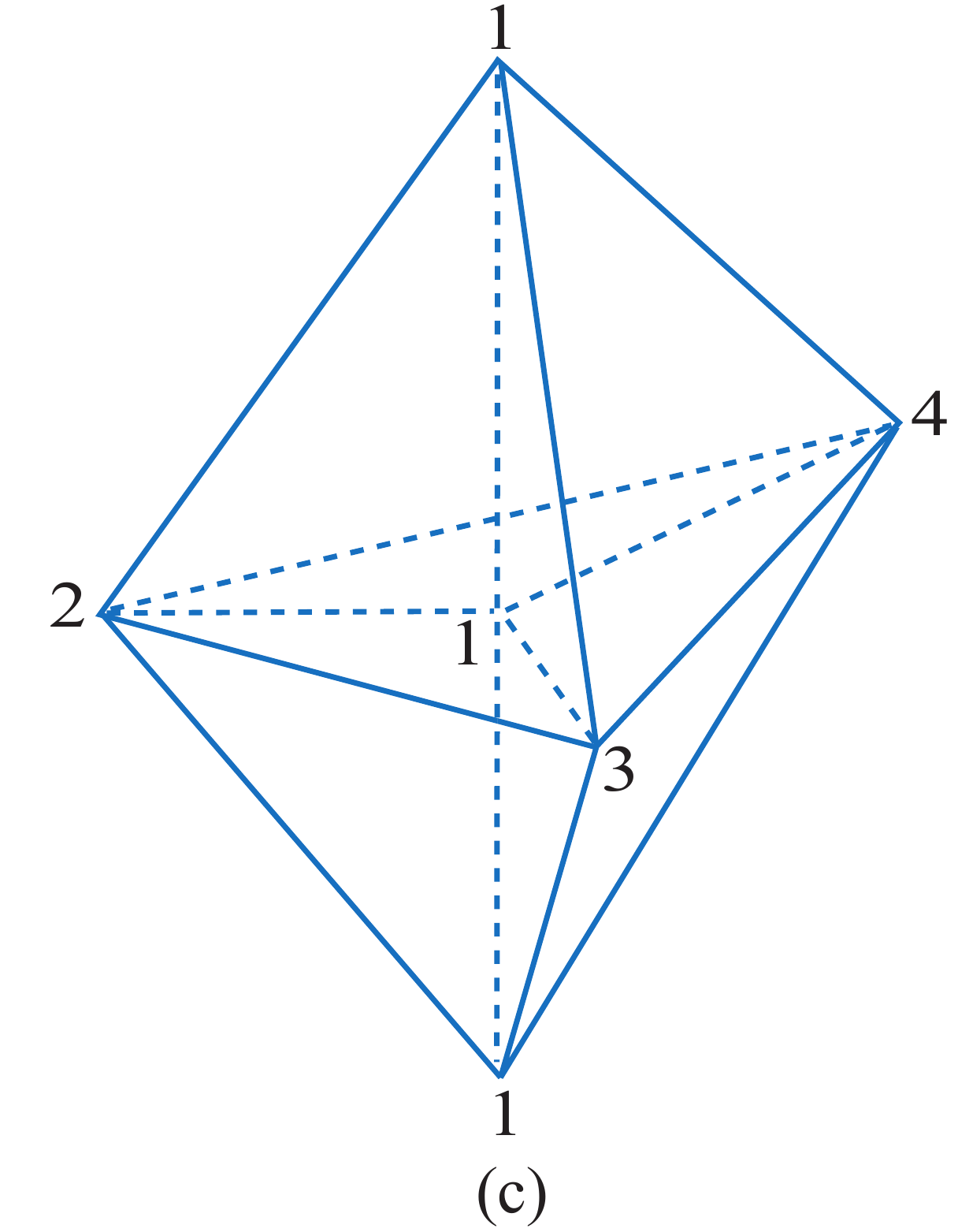}
			\end{array}$
			\caption{Examples of simplicial decomposition of polytopes and of labelings; refer to Example \ref{example}.}
			\label{hexagon}
		\end{figure}

		Note that in Theorem \ref{thm:bekker3} the assumption that $P$ is completely labeled alone is not sufficient to ensure that there exists a completely labeled simplex in the decomposition; the condition that $\deg(\partial f)\neq 0$  is necessary. See Example \ref{example}, (ii), (iii). It is also interesting to note that the non-degenerate condition of the labeling is incredibly weak, due to the fact that it only restricts the labeling of the $(n-1)$-dimensional faces of $P$. In \cite{Bekker1995} the condition is slightly stronger, that every $(r-1)$-dimensional face of $P$ cannot have $(r+1)$ different labels and every $(n-1)$-dimensional face of every $P_i$ in the decomposition cannot have $(n+1)$ different labels, however Theorem \ref{thm:bekker3} requires the much weaker condition that no $(n-1)$-dimensional face of $P$ has $(n+1)$ different labels both before and after the decomposition.

		\begin{ex}\label{example}
			
			(i) Consider the polygon $P$, the simplicial decomposition,   and the labeling shown in Figure \ref{hexagon}-(a). We have  $\deg(\partial f)=2$; there exists a completely labeled triangle.
			
			(ii) Consider the polygon $P'$, the simplicial decomposition,    and the labeling shown in Figure \ref{hexagon}-(b).   We have $\deg(\partial f)= 0$; there is no completely labeled simplex.
			
			(iii) Consider the polyhedron $P''$, the simplicial decomposition,    and the labeling shown in Figure \ref{hexagon}-(c). We have  $\deg(\partial f)= 0$;  there is no completely labeled simplex.
			
		\end{ex}

	We now give a more general notion of a realization. Given an $d$-dimensional, oriented, convex polytope  $P$, a  labeling  $\phi:P\to\{1,\ldots,n\}$, and a $d$-dimensional $n$-polytope $P'$ with vertices $a_1,\ldots,a_{n}$, a \emph{realization} of $\phi$ is a continuous map $f: P \to P'$, satisfying the following conditions:
	\begin{itemize}
		\item[(i)] If $v$ is a vertex of $P$ then $f(v)=a_{\phi(v)}$, i.e., $f(v)$ is the vertex $a_i$ of  $P'$ with the index $i$ equal to the label of $v$;
		\item[(ii)] If $S$ is an $(n-1)$-dimensional face of $P$  with vertices $v_1,\ldots,v_k$,  then $f(S)\subset \textrm{conv}(a_{\phi(v_1)},\ldots, a_{\phi(v_k)})$.
			\item[(iii)] $f(\partial P)=(\partial P')$,
	\end{itemize}
	
 We construct what shall be called $dP_N$ sets of vertices of a $d$-dimensional convex polytope, with $n$ vertices, as follows. If there exists a cross section of the polytope $P$ which keeps a set of vertices on the same side of the $d$-dimensional hyperplane (used during the cross section) then the set of those vertices forms a $dP_N$ set. With abuse of the definition, we also call a set of labels of those vertices a $dP_N$ set, with the restriction that every vertex be assigned a different label. Clearly, the set of a single vertex is a $dP_N$ set, for any $n$ and $d$. Note, however, that not all sets of vertices of a polytope form a $dP_N$ set. For the cube in Figure \ref{fig:crosssectionofcubes}, we see that $\{1,6,5,4\}$ and $\{3,2,7,8\}$ form such sets. For the same cube $C$, we can show that $\{3,7,8\}$ is an $3C_8$ set, but $\{5,4,3,2\}$ and $\{5,1,3\}$ are not. We consider the set of all vertices to not be a $dP_N$ set.

 Another way of thinking of $dC_N$ sets, relating specifically to cubes, is as follows. For any $ \mathbb{R}^d$ space, with each hyperoctant labeled with a label from a set of $2^d$ different labels, we take a $d$-dimensional hyperplane that passes through the origin. This hyperplane splits the space into two spaces $A$ and $B$, and it passes through some hyperoctants, and places the other hyperoctants entirely on one of its sides. For all hyperoctants that the hyperplane passes through, we make a single choice regarding the same $A$ or $B$ which we consider to contain them. Thus, $A$ and $B$ each contain a disjoint set of hyperoctants and the set of labels of the hyperoctants contained in either $A$ or $B$ forms a $dC_N$ set.

 As we will see, $(n-1)$-dimensional faces of a $P_i$ labeled with all labels from a $dP_N$ set map to $\partial P$, under a realization. Using the concept of a $dP_N$ set, we can add a new condition (iii) and we get the following theorem.

   \begin{figure}
	\includegraphics[width=.7\textwidth]{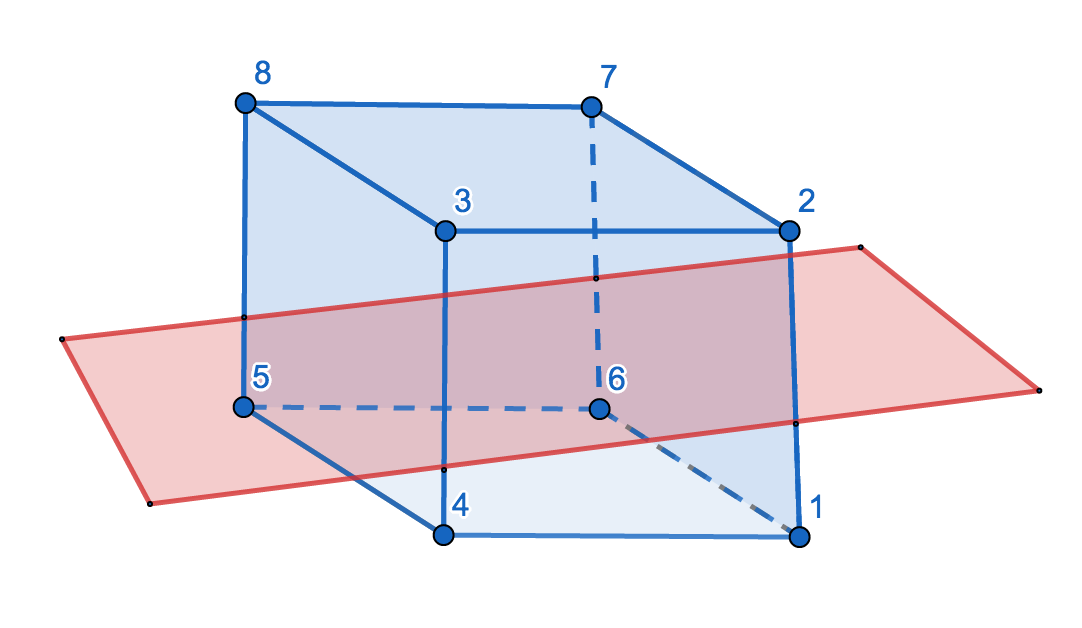}
	\caption{Cross section of cube.}
	\label{fig:crosssectionofcubes}
\end{figure}

\begin{thm}[Hyperplane Labeling Lemma]\label{thm:YS3} Let $P$ be a convex, $d$-dimensional polytope which is divided into convex, $d$-dimensional polytopes $\{P_i\}_{i\in I}$, with $I$ finite, such that $P=\bigcup_{i\in I}P_i$, and  for $i\neq j$, $\inte (P_i)\cap \inte (P_j)=\emptyset$ and $P_i\cap P_j$ is either empty or a face of both $P_i$ and $P_j$. Assume that $\phi:V(P)\cup\bigcup_{i\in I}V(P_i)\to\{1,2,\ldots,n\}$ is a labeling of the vertices of $P$ and of the $P_i$ such that:
	\begin{itemize}
		\item  There exists a $d$-dimensional $n$-polytope $P'$ with realization $f: P \to P'$ such that $\deg(\partial f)\neq 0$.
		\item Every vertex $v$ of every $P_i$ is assigned one of the labels of the vertices of $P$ in the $car(v)$. (We can replace this condition with the condition that the set of labels of the vertices contained on a $(n-1)$-dimensional face of $P$ is a $dP'_N$ set.)
		\item If the labeling of some $(d-1)$-dimensional face of some $P_i$ is not a $dP'_N$ set we add a finite number of points to the interior of the face and label them in such a way that the new labeling is a $dP'_N$ set.
	\end{itemize}
	
	A polytope $P_i$ is completely labeled if it has vertices or points (in the interior of a face) carrying labels $\{1,2,\ldots, n\}$. This lemma states that there is at least one completely labeled $P_i$. 
	
\end{thm}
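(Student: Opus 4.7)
The plan is to mimic the proof of Theorem \ref{thm:bekker3}, replacing the simplex target $T$ by the general $d$-dimensional $n$-polytope $P'$ and accounting for the extra interior vertices introduced in the third hypothesis. The argument rests on two properties of the Brouwer degree: homotopy invariance, and additivity under decompositions of the domain. Given the realization $f:P\to P'$ with $\deg(\partial f)\neq 0$, the goal is to produce a homotopic map $f^*:P^*\to P'$ whose restriction to each piece $P_i^*$ of a suitably refined decomposition is itself a realization that sends $\partial P_i^*$ into $\partial P'$.

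First I would enlarge $P$ to a polytope $P^*$ by adjoining as vertices all the vertices of the $P_i$ that lie on $\partial P$, together with the extra points prescribed by the third hypothesis on each $(d-1)$-face of a $P_i$ whose labeling is not already a $dP'_N$ set. The carrier condition (or its $dP'_N$ variant) on $\partial P$ together with the $dP'_N$ condition on interior $(d-1)$-faces ensures that, in the enlarged labeling, no $(d-1)$-face of $P^*$ carries all of $\{1,\dots,n\}$, while every $(d-1)$-face of every $P_i^*$ is labeled by a $dP'_N$ set. This is precisely the polytope analogue of the non-degenerate labeling used in Theorem \ref{thm:bekker3}.

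The next step is to exploit the defining property of $dP'_N$ sets: a set of labels that can be separated by a hyperplane from its complement corresponds, on $P'$, to vertices lying on a single face of $P'$, and hence, under any realization, to a $(d-1)$-face whose image lies entirely on $\partial P'$. This enables a homotopy from $f$ to a new realization $f^*:P^*\to P'$, performed piecewise on each $P_i^*$, such that every $(d-1)$-face of every $P_i^*$ is pushed onto $\partial P'$ while nothing changes on $\partial P=\partial P^*$. The homotopy can be glued across shared faces because any shared interior $(d-1)$-face is mapped consistently under a realization from either side. By homotopy invariance and additivity of the degree,
\[
\deg(\partial f)\;=\;\deg(f)\;=\;\deg(f^*)\;=\;\sum_{i\in I}\deg\bigl(f^*|_{P_i^*}\bigr)\;=\;\sum_{i\in I}\deg\bigl(\partial f^*|_{\partial P_i^*}\bigr).
\]
The hypothesis $\deg(\partial f)\neq 0$ therefore forces at least one summand to be nonzero, and applying Proposition \ref{prop:Bekker}(iii) to that $P_i^*$ shows that $P_i^*$ is completely labeled, which by construction is exactly the statement that the corresponding $P_i$ carries all $n$ labels among its vertices and inserted interior-face points.

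The hard part, and the step requiring the most care, is the piecewise homotopy: showing that the $dP'_N$ condition on every interior $(d-1)$-face is sufficient to deform $f$, through realizations on each $P_i^*$, into an $f^*$ whose restrictions are simultaneously realizations and whose interior faces are carried onto $\partial P'$. Without the $dP'_N$ property on interior faces the image of such a face would not lie on $\partial P'$ and the identity $\deg(f^*|_{P_i^*})=\deg(\partial f^*|_{\partial P_i^*})$ would not hold, breaking the decomposition of the degree. Verifying that the added interior-face points and the carrier condition together guarantee this geometric compatibility is, in my view, the central technical content beyond the Bekker--Sperner argument already given.
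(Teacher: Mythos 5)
Your overall plan---refine $P$ to $P^*$, homotope $f$ to a piecewise realization $f^*$, apply degree additivity, and invoke Proposition \ref{prop:Bekker}(iii)---matches the paper's. However, there is a concrete error in the key step. You claim that a $dP'_N$ set of labels ``corresponds, on $P'$, to vertices lying on a single face of $P'$.'' This is false: a $dP'_N$ set is by definition a set of vertices of $P'$ lying on one side of a cross-sectional hyperplane, and such vertices generally do \emph{not} lie on a single face. The paper's own example (Figure \ref{fig:crosssectionofcubes}) gives $\{1,6,5,4\}$ and $\{3,7,8\}$ as $3C_8$ sets of the cube, and neither is a face of the cube. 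The correct mechanism, which the paper uses, is that the separating hyperplane splits $\partial P'$ into two topological $(d-1)$-disks, and a $(d-1)$-face of a $P_i$ labeled by a $dP'_N$ set is mapped by $f^*$ onto one of those disks, not onto a single face of $P'$. This distinction matters: the convex hull of the $dP'_N$ vertices is generically a full $d$-dimensional region, not a subset of $\partial P'$, so the single-face claim is precisely what you are implicitly relying on to conclude that the image lands on $\partial P'$, and without it that inference collapses. Relatedly, you assert rather than verify the gluing of $f^*$ across shared lower-dimensional faces; the paper addresses this by showing that for two adjacent $(d-1)$-faces $S_i$, $S_j$ with defining hyperplanes $H_i$, $H_j$, there is a common path-connected subset of $\partial P'$ kept on the relevant sides of both $H_i$ and $H_j$, to which the shared face can be sent. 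You flag this gluing step as ``the central technical content'' but do not supply it, and the erroneous characterization of $dP'_N$ sets would not support the construction even if carried out.
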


	\begin{proof}
		We must show that there exists a realization $f: P\to P'$ such that $\partial P_i$ is always mapped to $\partial P'$ and the restriction of $f_{\mid {P_i}}$ to $P_i$ is also a realization. If the set of labels of the vertices of every $(d-1)$-dimensional face of every $P_i$ must be the same as a set $dP'_N$ of $P'$ then, under a realization, every $(d-1)$-dimensional face of a $P_i$ polytope will map to the boundary of $P'$ formed out of the vertices of $dP'_N$. We can split $\partial P'$ into two, using a hyperplane, and the $P_i$ face will map onto one of the halves. For one of the halves, every vertex is an image of the realization of the $(d-1)$-dimensional face of the $P_i$, otherwise the set of labels of the face won't form a $dP'_N$ set. Thus $\partial P_i$ is always mapped to $\partial P'$. While it is easy to show that every $(d-1)$-dimensional face of every $P_i$ will map to the boundary of $P'$, it is slightly more complicated to show that such a map can be made continuous. We must prove that for two $(n-1)$-dimensional faces $S_i$ and $S_j$ sharing a $k$-dimensional face $W$ there always exists a continuous realization $f^*$ of $S_i$ and $S_j$ which maps $W$ onto a fixed subset of $f^*(S_i)\cap f^*(S_j)$. We start with the hyperplanes $H_i$ and $H_j$ which provide the $dP_N$ sets of $S_i$ and $S_j$. Because they share the face $W$ there will be a path connected subset of $\partial P$ which both $H_i$ and $H_j$ keep on their sides corresponding to their $dP_N$ set and $W$ can be mapped to that subset. The remainder of the proof is the same as in Theorem \ref{thm:bekker3}.
		\end{proof}

\begin{rem}
	When proving Kakutani's fixed point theorem we will have that $P$ is a cube with $P'=P$ and with every vertex of $P$ labeled with a unique label from $\{1,\ldots,n\}$ such that the $\deg(\partial f)= 1$. 
\end{rem}

The new condition (iii) is not very limiting. First, notice that it only applies to $(d-1)$-dimensional faces of the $P_i$'s and not every dimensional face of the $P_i$'s. Furthermore, we are allowed to add vertices and labels to the $(d-1)$-dimensional faces to fix their labels without the labels effecting any of the other faces of the $P_i$. Obviously, if a $P_i$ has a face on the boundary of $P$ and we add a vertex to the face, thereby adding a vertex to the original $P$ (for the purposes of the realization), we must make sure that it satisfies condition (ii) of the labeling. In Section \ref{kakutani}, we will use the hyperplane labeling lemma to prove Kakutani's theorem, but just the fact that if we were to fix every face we would get a completely labeled cube will be enough to show that a fixed point exists, and that one $(d-1)$ face of some $P_i$, with labels that don't form a $dC_N$ set, will approximate its location. We actually never need to add vertices to the decomposition. For an example of Theorem \ref{thm:YS3}, see Figures \ref{fig:acube}, \ref{fig:labeling}, \ref{fig:problematicface}, and \ref{fig:addingvertexandCL}.

  In Figure \ref{hpllexample}, the cube is decomposed into two polytopes, with a face of both labeled with $\{1,4,3,7,6,5\}$, which is not a $dC_N$ set. We don't have a completely labeled polytope in the decomposition. As one can see, we can decompose the face into two faces, as in Figure \ref{hpllface3}, but every such decomposition will yield a face labeled with labels which don't form a $dC_N$ set.

\begin{figure}
	\includegraphics[width=.7\textwidth]{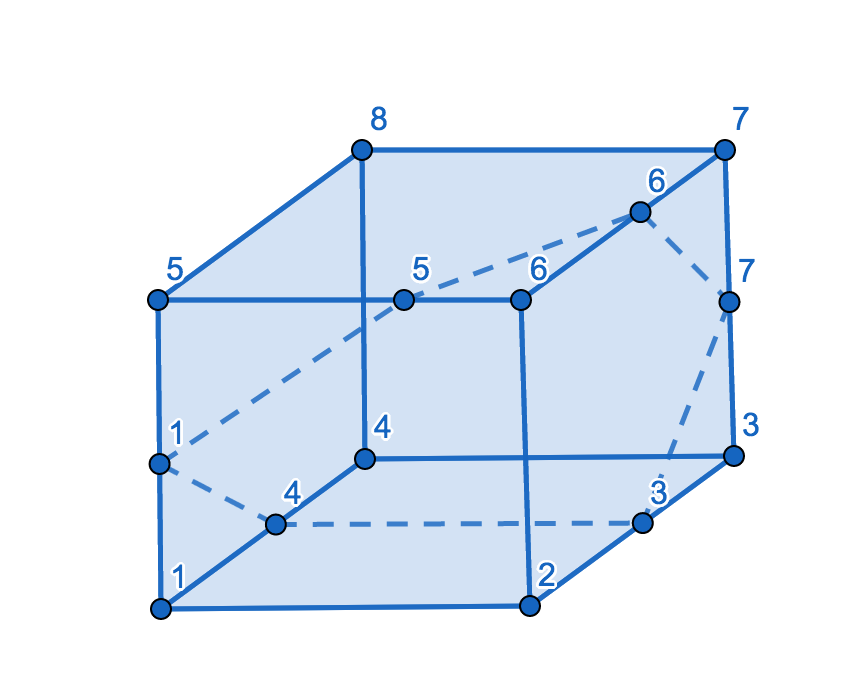}
	\caption{Decomposition of the cube into two polytopes.}
	\label{hpllexample}
\end{figure}

\begin{figure}
	\includegraphics[width=.7\textwidth]{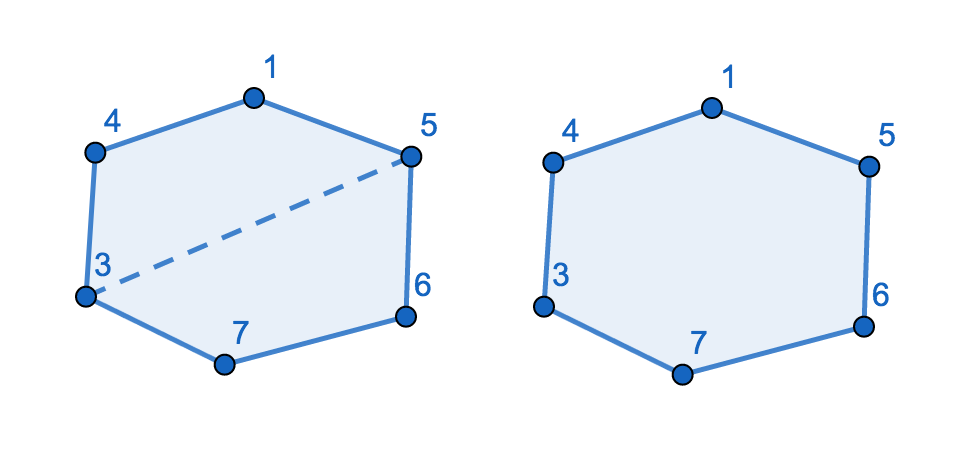}
	\caption{We decompose the face on the right into two faces not labeled with $dC_N$ sets.}
	\label{hpllface3}
\end{figure}

\begin{rem}
	The following condition can take the place of the third condition. We call two labels $l$ and $w$ similar, or rather $k$-similar, if there is a $k$-dimensional face of $P'$ such that $a_l$ and $a_w$ share the face. Also, a label is similar to itself for all dimensions. If a vertex of $P_i$  shares a $k$-face with another vertex (either of $P_i$ or of $P_j$) then both of their labels must be $k$-similar, for all $k$.
\end{rem}

	\section{Proof of Kakutani's Fixed Point Theorem}\label{kakutani}
	
			As mentioned, the hyperplane labeling lemma can be used to give an alternative proof of Kakutani's theorem. We  consider a (multivalued) upper semicontinuous map $f:C\to C$ on a $d$-dimensional cube $C$ whose $n=2^d$  vertices are defined by $z_{i_1,\ldots,i_d}=(\pm 1,\ldots,\pm 1)\in\mathbb{R}^d$. We also require that $f(z)$ be convex for all $z$. Let $\mathscr{Z}=\{\pm 1\}$. For a point $z=(x_1,\ldots, x_n)$ in $\R^n$ and $\Delta z=(x'_1-x_1,\ldots, x'_n-x_n)$, for every element $z'=(x'_1,\ldots, x'_n) \in f(z)$, we can assign a $O$ label $\ell\in\mathscr{Z}^n$, based on the following conditions.
		\begin{description}
			\item[Condition O]{$ $}
			\begin{itemize}\item if $\Delta z\in\inte (\mathscr{O}_\ell)$ then we assign to $z$ the label  $\ell$;
				\item if $\Delta z\in \inte(\mathscr{O}_{\ell_1}\cap \ldots
				\cap \mathscr{O}_{\ell_k})$ for some labels $\ell_1,\ldots,\ell_k$ that are mutually distinct, then we assign to $z$ either one of the labels $\ell_1,\ldots,\ell_k$, say $\ell_i$, contingent on the condition that $\Delta z\in\inte (\mathscr{O}_{\ell_i})$ for slight homotopy of $f$.
			\end{itemize}
		\end{description}
		
		Now, for a point $z=(x_1,\ldots, x_n)$, we can also assign any $H$ label $\ell\in\mathscr{Z}^n$ as follows:
		
		\begin{description}
			\item[Condition H]{$ $}
			\begin{itemize}
				\item  consider the translation of $\R^d$ so that $z$ is at the origin;
				\item  there is a hyperplane dividing $\R^d$ and passing through $z$, after the translation, so that $f(z)$ is contained on one of its sides;
				\item  $\ell \in dC_N$ of $\R^d$ created by the side of the hyperplane which contains $f(z)$;
				\item  all labels in this $dC_N$ set must satisfy the second condition of the hyperplane labeling lemma;
				\item the set of labels that $z$ can be labeled with under Condition $O$ must be some subset of this $dC_N$ set.
			\end{itemize}
		\end{description}
		
		We can show that such a labeling always exists. First, $z$ can be labeled with labels from potentially multiple sets $dC_N$, corresponding to every hyperplane that satisfies the other criteria of the condition. Second, for every hyperplane there are two $dC_N$ sets corresponding to the side that $f(z)$ is on, one in which the hyperoctants that the hyperplane passes through belong to the $dC_N$ set and the other in which the hyperoctants that the hyperplane passes through do not belong to the $dC_N$ set. This choice is what allows us to always satisfy the second condition of the hyperplane labeling lemma.

	\begin{thm}[Kakutani's Fixed Point Theorem]\label{5} Let C be a $d$-dimensional closed cube and let $f$ be an upper semicontinuous multivalued mapping from $C$ to nonempty, convex and compact subsets of C. Then, there exists a point $x \in C$ such that $x \in f(x)$.
\end{thm}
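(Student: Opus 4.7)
The plan is to subdivide $C$ into a fine cubical mesh, label each subdivision vertex using Conditions O/H, apply the hyperplane labeling lemma (Theorem \ref{thm:YS3}) to produce a completely labeled small cube, and pass to the limit as the mesh refines; the limit point will be a fixed point by a convex hull argument. Fix $N$ and partition $C=[-1,1]^d$ into $N^d$ small cubes of edge length $2/N$. Label each vertex $v\in\{\pm 1\}^d$ of the big cube by its antipode $-v$: this is the unique Condition O label since $f(v)\subset C$ forces $v'-v\in\mathscr{O}_{-v}$ for every $v'\in f(v)$. Taking $P'=C$, the resulting realization is homotopic to the antipodal map on $\partial C$, so $\deg(\partial f)=(-1)^d\neq 0$, verifying the first hypothesis of Theorem \ref{thm:YS3}. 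For each remaining subdivision vertex $z$, assign a Condition O label, resolving ambiguities on orthant boundaries. On a boundary vertex of $C$, the constraint $f(z)\subset C$ automatically places the Condition O label into the carrier of $z$, giving condition (ii); wherever an interior $(d-1)$-face of a small cube fails to carry a $dC_N$ set of labels, invoke item (iii) of the lemma to add interior repair points whose labels complete the set. Theorem \ref{thm:YS3} then supplies, for each $N$, a small cube $Q_N$ whose vertices (together with any repair points) carry every label in $\{\pm 1\}^d$.

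For each $\ell\in\{\pm 1\}^d$, select a subdivision vertex $z_\ell\in Q_N$ bearing label $\ell$; by Condition O there exists $z'_\ell\in f(z_\ell)$ with $z'_\ell-z_\ell\in\mathscr{O}_\ell$. Since $\mathrm{diam}(Q_N)=2\sqrt{d}/N\to 0$, along a subsequence each $z_\ell$ converges to a single point $x^\ast\in C$, and by upper semicontinuity of $f$ each $z'_\ell$ has a further subsequential limit $y_\ell\in f(x^\ast)$ with $y_\ell-x^\ast\in\mathscr{O}_\ell$. I claim $x^\ast\in\mathrm{conv}\{y_\ell:\ell\in\{\pm 1\}^d\}\subset f(x^\ast)$, which proves the theorem by convexity of $f(x^\ast)$. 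If not, strict separation produces $n\neq 0$ with $n\cdot(y_\ell-x^\ast)>0$ for every $\ell$. Choose $\ell^\ast$ by $\ell^\ast_i=-\mathrm{sign}(n_i)$ (arbitrarily where $n_i=0$); then $y_{\ell^\ast}-x^\ast\in\mathscr{O}_{\ell^\ast}$ forces $n_i(y_{\ell^\ast}-x^\ast)_i\le 0$ componentwise, so $n\cdot(y_{\ell^\ast}-x^\ast)\le 0$, contradicting the separation.

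\textbf{Main obstacle.} The delicate step is the labeling: one must make Condition O choices (using Condition H and the item (iii) repair mechanism only when necessary) so that every hypothesis of Theorem \ref{thm:YS3} holds while at the same time every label of the completely labeled cube $Q_N$ is carried by a genuine subdivision vertex where the Condition O property $z'_\ell-z_\ell\in\mathscr{O}_\ell$ holds. The carrier condition and the degree condition come for free from the geometry $f(C)\subset C$ and the antipodal labeling on $\partial C$; the $dC_N$ condition on interior $(d-1)$-faces is the real constraint, and it is handled by combining the flexibility of Condition H with refinement, since any repair point or H-labeled vertex concentrates near $x^\ast$ as $N\to\infty$ and may be replaced in the convex hull argument by a nearby Condition O subdivision vertex, preserving the limit conclusion.
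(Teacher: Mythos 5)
Your overall plan (subdivide, label by Condition O on vertices and Condition H on repair points, apply Theorem~\ref{thm:YS3} to get a completely labeled small cube $Q_N$, and pass to the limit) is the same as the paper's, and your degree computation $\deg(\partial f)=(-1)^d$ for the antipodal boundary labeling is correct (note the paper's remark asserting $\deg(\partial f)=1$ is only correct in even dimension, but only $\neq 0$ is needed). Your convex hull argument --- if $x^\ast\notin\mathrm{conv}\{y_\ell\}$ then strict separation by $n$ is contradicted by the label $\ell^\ast=-\mathrm{sign}(n)$ --- is a clean and correct way to make precise what the paper only asserts (``clearly $x^\ast$ is a fixed point'') in the case that all labels of $Q_N$ sit on genuine subdivision vertices carrying Condition~O.

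The gap is exactly where you flagged it, and your proposed resolution does not close it. You write that a repair point or H-labeled vertex ``may be replaced in the convex hull argument by a nearby Condition~O subdivision vertex, preserving the limit conclusion,'' but this is not true in general: a repair point carrying label $\ell$ is inserted on a face \emph{precisely because} no genuine vertex of that face had a Condition~O label that would complete a $dC_N$ set, so there need not be any nearby subdivision vertex with Condition~O label $\ell$. If label $\ell$ only appears at a repair point of $Q_N$, you cannot select a $z_\ell$ for which $z'_\ell-z_\ell\in\mathscr{O}_\ell$, and the convex hull argument has a missing orthant. The paper closes this gap by a different contradiction: suppose $x^\ast$ is not a fixed point and $Q_N\to x^\ast$. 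Then a hyperplane $H$ (through $x^\ast$ after translation) strictly separates $x^\ast$ from $f(x^\ast)$, and for $N$ large, upper semicontinuity puts $f(z)$ on the $f(x^\ast)$-side of (a nearby translate of) $H$ for every $z$ near $x^\ast$; hence every Condition~O label of every vertex of $Q_N$ lies in the $dC_N$ set determined by the $f(x^\ast)$-side of $H$, and every Condition~H label of every repair point in $Q_N$ lies in that same $dC_N$ set (since the repairs near $x^\ast$ use the Condition~H labels of $x^\ast$). That $dC_N$ set is a proper subset of the $2^d$ labels, so $Q_N$ cannot be completely labeled --- contradiction. Equivalently in your notation: the label $\ell^\ast=-\mathrm{sign}(n)$ can appear \emph{nowhere} in $Q_N$ for $N$ large, neither as a Condition~O nor as a Condition~H label, which already contradicts complete labeling without needing a $y_{\ell}$ for every $\ell$. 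You should replace the ``replacement'' claim by this argument.
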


	\begin{proof}
Let $\{C^N_i\}$  be a subdivision of $C$, with $\textrm{diam}(C^N_i)\to 0$ as $N\to\infty$. We label vertices of the subdivision based on Condition $O$. Unless there is some $z^*$ such that $ z^*\in f(z^*)$, we always have an infinite $N$ such that we can add a finite number of vertices to every face of every $C_i$ and label the points, based on Condition $H$, in such a manner that the third condition of the hyperplane labeling lemma is satisfied.
	We prove this by contradiction. Suppose otherwise, then there are infinite $N$ with a $C_i$ such that the set of labels of all points on a face of some $C_i$ does not form a $dC_N$ set. By compactness and the fact that $\textrm{diam}(C^N_i)\to 0$, the sequence of such faces contains a subsequence which converges to a point $z'$. Take $\mathscr{O}_\ell$, for various $\ell$, to be the smallest set of hyperoctants that contain $f(z')$ in their interior. By upper semicontinuity of $f$, we can find $\delta$ such that for $x \in N_\delta(z')$ we have that $f(x)\subset N_{\epsilon}f(z') \subset \bigcup_\ell \mathscr{O}_\ell$. Take $N$ big enough so that the problematic face $F_i$ of $C_i$ is contained within $N_\delta(z')$. Thus, there is a hyperplane, corresponding to the $dC_N$ set of $z'$, which places $F_i$ on one side and $f(F_i)$ on the other, and we can add a finite number of points to the face and label, base on Condition $H$, so that the set of labels on the face is a $dC_N$ set. Notice, we only use Condition $H$ to fix problematic faces. Furthermore, when fixing faces $F_i$ we use labels from the $z'$ which the faces converge to and for each point $z'$ with problematic faces converging to it we choose one hyperplane $H$ to use for fixing the faces. More formally, for $N_{\epsilon}f(z')$ and $N_\delta (z')$ with a hyperplane separating the two, if $F_i \subset N_\delta (z')$ then we can fix it with the labels of $z'$, under Condition $H$, for that specific hyperplane. If we have two such points $z'$ then we choose the one closest to the center of $F_i$. If there are two different hyperplanes that work for $z'$, under Condition $H$, we choose one and fix all problematic faces that converge to $z'$ with that specific hyperplane.
	
		Let $\{C^N_i\}$  be a subdivision of $C$, with $\textrm{diam}(C^N_i)\to 0$ as $N\to\infty$. We label every vertex $z$ in the subdivision based on Condition $O$. Such a labeling satisfies the first two conditions of the hyperplane labeling lemma. Now for infinite $N$, we can add a finite number of vertices to every face of every $C_i$ and label the points, based on Condition $H$, in such a manner that the third condition of the hyperplane labeling lemma is satisfied. Thus, we can form a fine enough decomposition such that the three labeling conditions for Theorem \ref{thm:YS3} are satisfied. So we have a completely labeled cube $C^N_{i^*}$ for infinite subdivisions of $C$, with $\textrm{diam}(C^N_i)\to 0$ as $N\to\infty$. By compactness, we can find a converging sequence of such cubes. Suppose they converge to a point $x^*$ which is not a fixed point.  If for infinite $N$ no face of every $C^N_{i^*}$ was fixed then clearly $x^*$ is a fixed point. So we must have a sequence of problematic faces $F_i$ which were fixed so that the third condition of the hyperplane labeling lemma is satisfied. Because the faces converge to $x^*$ they are fixed with the labels of $x^*$ under Conditions $H$. Notice, however, that the set of such labels is not all $2^n$ labels and has the Condition $O$ labels as a subset. Thus,  $C^N_{i^*}$ cannot be completely labeled and we have a contradiction.

\end{proof}

\begin{cor}\label{4.4}
	Let C be a $d$-dimensional closed cube and let $f$ be an upper semicontinuous multivalued mapping from $C$ to nonempty, convex and compact subsets of C. Let $\{C^N_i\}$  be a subdivision of $C$, with $\textrm{diam}(C^N_i)\to 0$ as $N\to\infty$. If we label vertices of $C^N_i$ based on Condition $O$ then either some sequence of problematic faces $F_i$ (faces not labeled with $dC_N$ labels) converge to a fixed point of $C$ or completely labeled cubes converge to the fixed point.
\end{cor}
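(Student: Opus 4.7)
The plan is to follow the dichotomy already implicit in the proof of Theorem \ref{5}. First, label every vertex of the subdivision $\{C^N_i\}$ according to Condition $O$. The first two hypotheses of the hyperplane labeling lemma (Theorem \ref{thm:YS3}) are then automatic with $P'=C$, since the vertices of $C$ itself may be assigned the standard cube labels for which $\deg(\partial f)=1$. Call a $(d-1)$-dimensional face $F$ of some $C^N_i$ \emph{problematic} if the set of Condition $O$ labels on its vertices is not a $dC_N$ set. Two cases arise depending on whether problematic faces persist as $N\to\infty$.

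In the first case, suppose problematic faces occur for infinitely many $N$; pick one such face $F_N\subset C^N_{i(N)}$ at each level. By compactness of $C$ and the fact that $\textrm{diam}(C^N_i)\to 0$, extract a subsequence $F_{N_k}$ that shrinks to a single point $z'\in C$. I claim $z'\in f(z')$. Suppose not; then, after translating the origin to $z'$, the set $f(z')$ is disjoint from $z'$ and therefore sits in the interior of a union $\bigcup_\ell\mathscr{O}_\ell$ of hyperoctants that is strictly separated from $z'$ by some hyperplane $H$. By upper semicontinuity, choose $\delta>0$ so that $f(N_\delta(z'))\subset N_\epsilon(f(z'))\subset\bigcup_\ell\mathscr{O}_\ell$, with $H$ still separating. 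For $N_k$ large enough, $F_{N_k}\subset N_\delta(z')$, and by construction of Condition $O$ every vertex $z$ of $F_{N_k}$ receives a label from the single $dC_N$ set determined by $H$, contradicting problematicness of $F_{N_k}$. Hence $z'$ is a fixed point and $F_{N_k}$ is the desired sequence.

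In the complementary case, there exists $N_0$ beyond which no face of any sub-cube is problematic. For each such $N$ the Condition $O$ labeling satisfies all three hypotheses of Theorem \ref{thm:YS3} without adding any vertices, so there is a completely labeled cube $C^N_{i^*}$. Compactness extracts a convergent subsequence $C^{N_k}_{i^*}\to x^*$. To finish, reuse the concluding argument from the proof of Theorem \ref{5}: every one of the $2^d$ Condition $O$ labels appears on some vertex of $C^{N_k}_{i^*}$, upper semicontinuity forces $f(x^*)$ to meet every closed hyperoctant at $x^*$, and convexity of $f(x^*)$ then forces $x^*\in f(x^*)$.

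The main obstacle is the upper-semicontinuity argument in the problematic-face case: converting the pointwise fact $z'\notin f(z')$ into a uniform separating hyperplane on a whole $\delta$-neighborhood of $z'$, and then showing that this single hyperplane forces the Condition $O$ labels of all nearby vertices into a common $dC_N$ set. Once that step is in place, both branches of the dichotomy follow quickly from compactness together with the hyperplane labeling lemma, and no additional ingredient beyond those used in Theorem \ref{5} is required.
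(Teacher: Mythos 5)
Your proof follows essentially the same path the paper has in mind: the corollary is stated without its own proof precisely because it is meant to be read off from the proof of Theorem \ref{5}, and your two-case split — (1) infinitely many problematic faces accumulate at a point $z'$ and the separating-hyperplane/USC argument forces $z'\in f(z')$; (2) otherwise the hyperplane labeling lemma produces completely labeled cubes whose limit is a fixed point via the convexity-plus-hyperoctants argument — is exactly the dichotomy implicit there. Your concluding step in Case~2 (every closed hyperoctant at $x^*$ is hit, so convexity of $f(x^*)$ forces $x^*\in f(x^*)$) is a slightly more direct rewriting of the paper's contradiction argument, and it is correct.

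One inconsistency worth flagging: the notion of ``problematic'' is being used in two different senses. In Case~1 you conclude a contradiction from the fact that all Condition~$O$ labels on $F_{N_k}$ lie \emph{inside} a single $dC_N$ set — which shows $F_{N_k}$ is not problematic only if ``problematic'' means ``labels not contained in any $dC_N$ set.'' In Case~2 you claim the three hypotheses of Theorem~\ref{thm:YS3} are satisfied ``without adding any vertices'' — which requires that every face's labels \emph{equal} a $dC_N$ set, i.e.\ the stricter reading. These two readings disagree: a face whose labels form a proper subset of a $dC_N$ set would not be problematic in the first sense but would still require the vertex-adding step of condition~(iii) in the second. The fix is minor and consistent with the paper: use the ``not contained in any $dC_N$ set'' reading throughout (this is what the parenthetical ``faces not labeled with $dC_N$ labels'' suggests), and in Case~2 acknowledge that one may still need to add Condition~$H$ vertices to the non-problematic faces to apply Theorem~\ref{thm:YS3}; the concluding argument of Theorem~\ref{5} already covers completely labeled cubes with added Condition~$H$ vertices, so the rest of your Case~2 goes through unchanged.
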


It has recently been shown in \cite{JEANJACQUESHERINGS200889} that locally gross direction preserving maps, which are possibility discontinuous, have the fixed point property. The definition of locally gross direction preserving maps for single valued maps is as follows. 

\begin{defn}\label{lgdp} A locally gross direction preserving map is a  map $f:P \to P$ such that for all $x \in P$, where $x$ is not a fixed point, there exists a $\delta$ such that for $y,z\in N_\delta (x) \cap P$ we have that \newline
	
	$(f(y)-y)^T(f(z)-z)\geq 0$. \newline

\end{defn}

Meaning that for some ball around $x$ every point in the ball should be moving in the same general direction and so the dot product between the two vectors should be non-negative. Clearly every continuous function is locally gross direction preserving, however it is also clear that some locally gross direction preserving maps might have discontinuities. The requirement that points should be moving, under the map, in a general direction does not ensure that the map be continuous. It was shown in \cite{JEANJACQUESHERINGS200889}  that every locally gross direction preserving map from a convex polytope to itself has a fixed point. The results were then extended in \cite{lmk} to all compact and convex subsets of $R^n$. It is straightforward to see that the hyperplane labeling lemma provides a constructive proof that every locally gross direction preserving map from a convex polytope to itself has a fixed point. We start by giving a more general definition of locally gross direction preserving maps, one which works for multivalued mappings from $P$ to nonempty, compact subsets of $P$. 

\begin{defn}\label{H}
	We call $f$, a multivalued map, locally gross direction preserving if for all $x$ which is not a fixed point there exists some $\delta$ such that $N_\delta (x)$ and $f(N_\delta (x))$ can be separated by some hyperplane.
\end{defn} 

\begin{thm}  
	
Definition \ref{lgdp} and Definition \ref{H} are equivalent.   
	
\end{thm}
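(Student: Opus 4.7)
My plan is to prove the two implications in turn, viewing any single-valued $f$ as a multivalued map with singleton images so that both definitions apply simultaneously. The common strategy will be to convert between the pointwise dot-product inequality of Definition \ref{lgdp} and the hyperplane separation of Definition \ref{H}, exploiting upper semicontinuity of $f$ to control the images of small neighborhoods.

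For Definition \ref{lgdp} $\Rightarrow$ Definition \ref{H}, at a non-fixed point $x$ I would pick $w \in f(x)$ (so $w \neq x$) and set $v = (w - x)/\|w - x\|$. Applying the multivalued analog of the LGDP condition with $z = x$ against the specific displacement $w - x$ will yield $(w' - y)^T v \geq 0$ for every $y$ in a small neighborhood $N_\delta(x)$ and every $w' \in f(y)$, so every displacement has non-negative $v$-component. Upper semicontinuity then forces $f(N_{\delta'}(x))$ to concentrate near $f(x)$ for small $\delta'$, so I can pick $\delta' < \|w - x\|/4$ with $y^T v \leq x^T v + \delta'$ on $N_{\delta'}(x)$ and $(w')^T v \geq x^T v + \|w - x\| - \delta'$ on $f(N_{\delta'}(x))$. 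The hyperplane orthogonal to $v$ at any level strictly between these bounds will separate the two sets.

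For the converse Definition \ref{H} $\Rightarrow$ Definition \ref{lgdp}, I will assume $\{p : p^T v = c\}$ separates $N_\delta(x)$ from $f(N_\delta(x))$. For any $y, z \in N_\delta(x)$ and any $w \in f(y), w' \in f(z)$, I will decompose $w - y = \alpha_1 v + \beta_1$ and $w' - z = \alpha_2 v + \beta_2$ with $\beta_i \perp v$. The separation gives $\alpha_i = (w - y)^T v \geq c - y^T v \geq 0$, while $\|\beta_i\|$ is controlled by the combined diameter of $N_\delta(x)$ and $f(N_\delta(x))$. Shrinking $\delta$, upper semicontinuity makes this diameter small while the $\alpha_i$ stay bounded below by the strict-separation gap, so $(w - y)^T (w' - z) = \alpha_1 \alpha_2 + \beta_1 \cdot \beta_2 \geq 0$.

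The main obstacle will be this orthogonal-direction control in the converse: two displacement vectors can both have non-negative projection on $v$ while still having a negative dot product, so without \emph{strict} separation and upper semicontinuity the implication would break. Strict separation is available because $x$ is not a fixed point, so $N_\delta(x) \cap f(N_\delta(x)) = \emptyset$ for all sufficiently small $\delta$, and compactness of $f(x)$ yields the positive gap needed. I expect the single-valued original of \cite{JEANJACQUESHERINGS200889} to drop out as the specialization in which every $f(y)$ is a singleton, so that $\beta_i$ vanishes with $\delta$ only because $N_\delta(x)$ shrinks.
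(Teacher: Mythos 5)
Your proposal correctly identifies the real difficulty here, which the paper glosses over: two vectors can both have non-negative inner product with a fixed normal $v$ while having a negative inner product with each other. The paper's proof simply asserts that if $(f(y)-y)$ and $(f(z)-z)$ are ``on the same side of the hyperplane'' then their dot product is non-negative, which is false as a general geometric statement (take $v=(1,0)$, $u_1=(0.1,1)$, $u_2=(0.1,-1)$). So you have spotted a genuine gap in the paper's own argument.

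However, the fix you propose --- controlling the orthogonal components $\beta_i$ by shrinking $\delta$ and invoking upper semicontinuity --- is not available under the stated hypotheses. Definitions \ref{lgdp} and \ref{H} are introduced precisely to cover maps that \emph{fail} to be (upper semi)continuous, and the theorem as written carries no continuity assumption. Without it, neither implication holds. For $H \not\Rightarrow \textrm{lgdp}$: define $f$ near the origin by $f(y)=(100,100)$ when the first coordinate of $y$ is rational and $f(y)=(100,-100)$ otherwise; a vertical hyperplane separates $N_\delta(0)$ from $f(N_\delta(0))$, but two displacement vectors can have negative dot product. For $\textrm{lgdp} \not\Rightarrow H$: take $f(y)=y+(\|y\|,0)$ for $y\neq 0$ and $f(0)=(1,0)$; all displacement vectors point in the positive $x_1$-direction so lgdp holds at $0$, but $f(N_\delta(0))$ accumulates at the origin, so no hyperplane separates it from $N_\delta(0)$. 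Thus the equivalence requires an additional regularity hypothesis (exactly the upper semicontinuity you import), which should be stated explicitly; your proof is essentially correct under that extra hypothesis, but as a proof of the theorem as stated it does not go through, and neither does the paper's.

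One further small point: even granting upper semicontinuity, your claim that ``the $\alpha_i$ stay bounded below by the strict-separation gap'' needs the gap to be uniform in the shrinking $\delta$; this is where you should explicitly invoke that $\textrm{dist}(x,f(x))>0$ (from $x\notin f(x)$, compactness of $f(x)$) to fix a separating slab independent of $\delta$, rather than relying on the separating hyperplane provided by Definition \ref{H}, which a priori could sit arbitrarily close to $N_\delta(x)$.
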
           

\begin{proof}
	If a single valued map is locally gross direction preserving, under Definition \ref{H},  then for all $x$ which are not fixed points there exists some $\delta$ such that $N_\delta (x)$ and $f(N_\delta (x))$ can be separated by some hyperplane. Thus for any two points $y,z \in N_\delta (x)$ we will have that $(f(y)-y)$ and $(f(z)-z)$ are contained on the same side of the hyperplane and so $(f(y)-y)^T(f(z)-z)\geq 0$. The converse follows by a similar argument.   
\end{proof}

	\begin{thm}[Generlization of Kakutani's Fixed Point Theorem] Let $C$ be a d-dimensional convex cube and let $f$ be an multivalued  locally gross direction preserving map from $C$ to nonempty, compact subsets of $C$. Then, there exists a point $x \in C$ such that $x \in f(x)$.
	\end{thm}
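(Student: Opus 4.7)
The plan is to follow the proof strategy of Theorem \ref{5}, with the hyperplane separation guaranteed by Definition \ref{H} directly playing the role that upper semicontinuity played there. I would take a sequence of subdivisions $\{C^N_i\}$ of $C$ with $\mathrm{diam}(C^N_i)\to 0$ as $N\to\infty$, and for each vertex $z$ of a subdivision select some $z'\in f(z)$ and label $z$ by Condition $O$ applied to $\Delta z = z' - z$. This immediately satisfies the first two requirements of the hyperplane labeling lemma; the remaining work is to produce, for $N$ large enough, interior-vertex additions that repair those $(d-1)$-dimensional faces of the $C^N_i$ whose $O$-labels do not form a $dC_N$ set.

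Assume for contradiction that $f$ has no fixed point. For each $x\in C$ Definition \ref{H} supplies $\delta_x>0$ and a hyperplane $H_x$ separating $N_{\delta_x}(x)$ from $f(N_{\delta_x}(x))$; after translation so that $x$ is the origin, $H_x$ distinguishes a $dC_N$ set, namely the hyperoctants on the $f$-side. Any $z\in N_{\delta_x}(x)$ and any $z'\in f(z)$ lie on opposite sides of $H_x$, so every Condition-$O$ label attached to such a $z$ already belongs to this $dC_N$ set. A compactness argument identical to the one used in the proof of Theorem \ref{5} — applied to any sequence of problematic faces, whose diameters tend to zero — then shows that for $N$ large, each problematic face $F$ of every $C^N_i$ lies inside some $N_{\delta_x}(x)$, and the labels missing from $F$ can be installed by adding finitely many interior points whose $H$-labels are chosen from the $dC_N$ set associated with that $x$. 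All three hypotheses of Theorem \ref{thm:YS3} are thereby met, so a completely labeled cube $C^N_{i^*}$ exists in every sufficiently fine subdivision.

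Pass to a subsequence of these completely labeled cubes converging to some $x^*\in C$. If $x^*$ were not a fixed point, Definition \ref{H} would yield $\delta>0$ and a separating hyperplane $H_{x^*}$; for $N$ large enough $C^N_{i^*}\subset N_\delta(x^*)$, so every vertex of $C^N_{i^*}$ — both the original Condition-$O$ vertices, whose labels lie in the $dC_N$ set of $H_{x^*}$ by the separation above, and any interior vertices that were added to repair faces converging to $x^*$, whose $H$-labels were drawn from that same $dC_N$ set — carries a label in a single $dC_N$ set. Since any $dC_N$ set is a proper subset of the $2^d$ vertex labels of $C$, $C^N_{i^*}$ cannot actually be completely labeled, a contradiction. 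The main obstacle I anticipate is justifying the face-fixing step once both convexity of $f(x)$ and upper semicontinuity are dropped, but Definition \ref{H} is engineered to supply exactly the separating hyperplane the argument needs; the only subtlety is that Condition $O$ picks one $z'\in f(z)$ per vertex, and this choice is immaterial because $H_x$ separates all of $N_{\delta_x}(x)$ from all of $f(N_{\delta_x}(x))$, independent of which selector we used.
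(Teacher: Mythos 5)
Your proof is correct and takes essentially the same approach as the paper, whose own proof of this theorem is only the remark that the argument ``can be taken almost directly from the previous proof'' since convexity of $f(x)$ is no longer needed. You have usefully made explicit what that remark leaves implicit: Definition~\ref{H} directly furnishes the separating hyperplane that, in the proof of Theorem~\ref{5}, had to be extracted from upper semicontinuity together with convexity of $f(z')$.
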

	
	\begin{proof}
		The proof for this generalization can be taken almost directly from the previous proof. Notice, the assumption that $f(x)$ be a convex set is not necessary.
	\end{proof}

\section{Conclusion}

We see that there is a combinatorial lemma which provides a simple proof for Kakutani's theorem and can be used to find fixed points of multivalued maps. As mentioned, in practice one need not fix the problematic faces in the decomposition, for either completely labeled cubes or a sequence of such faces approximate the location of the fixed point. We end with a statement about the following generalization of Kakutani's theorem. Suppose $X$ and $Y$ are two compact manifolds which are subspaces of $\R^d$ and we have a continuous point valued map $f:X \to Y$. It is well known that every such map has a fixed point index, meaning an index that can help determine whether $f$ has a fixed point. We start by looking at the function $g: \partial X \to S$, where $S$ is the unit sphere and $g(x)=\dfrac{f(x) -x}{\Vert f(x) -x\Vert}$. If we have that the Brouwer degree of $g$ is not zero then $f$ must have at least one fixed point. Intuitively, if $X$ is a polytope (including a polytope with holes), the Brouwer degree of $g$ can be approximated by the Brouwer degree of the realization of a decomposition of $X$ into $d$-dimensional quadrilaterals $Q^N_i$ labeled based on Condition $O$. Thus, Theorem \ref{thm:bekker3} can probably be used to provide a combinatorial proof that the fixed point index works. In \cite{GS1} such a proof was given for the case of correctly aligned windows, but the proof can be extended to various cases where the Brouwer degree of $g$ does not equal zero. It makes sense that such an index should also work for upper semicontinuous multivalued mappings, where each point maps to a convex set. More specifically, suppose we have a upper semicontinuous multivalued map $h:X \to 2^Y$, from points in $X$ to convex subsets of $Y$, and for every continuous function $f:X \to Y$ we have that the Brouwer degree of $g$ is not zero then presumably there is $x\in X$ with $x \in h(x)$. One way of proving such a theorem would be to use von Neumann's approximation lemma to approximate $h$ with a sequence of continuous functions $\{f_n\}$ and follow the method found in \cite{LM}. It might be useful, though, to find a combinatorial proof for such a theorem and the hyperplane labeling lemma can probably be used in such a manner, although certain restrictions would have to be placed on $\partial X$ so that the second condition of the lemma be satisfied.

	\section*{Acknowledgement}

I am grateful to Marian Gidea for making me aware of numerical applications of Kakutani's fixed point theorem, for insightful discussions and for reading and commenting on an earlier draft of this work.

	\begin{figure}[h]
	\includegraphics[width=.5\textwidth]{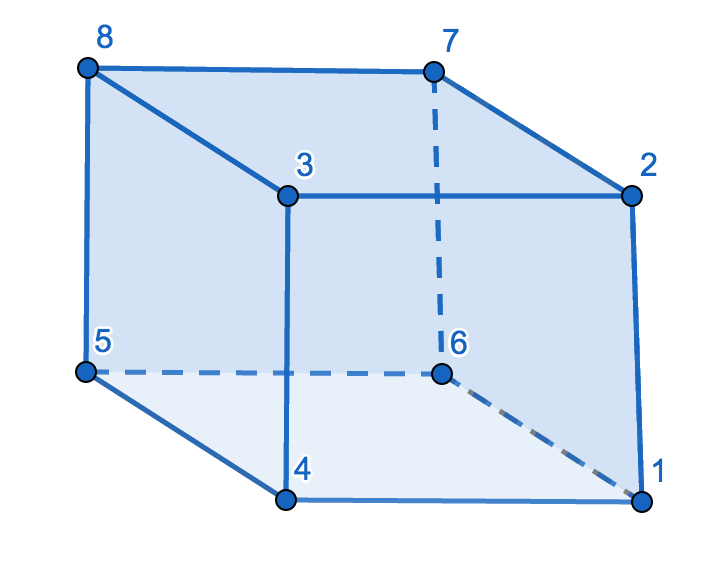}
	\caption{We start with a cube which we decompose into smaller cubes.}
	\label{fig:acube}
\end{figure}

\begin{figure}[h]
	\includegraphics[width=.5\textwidth]{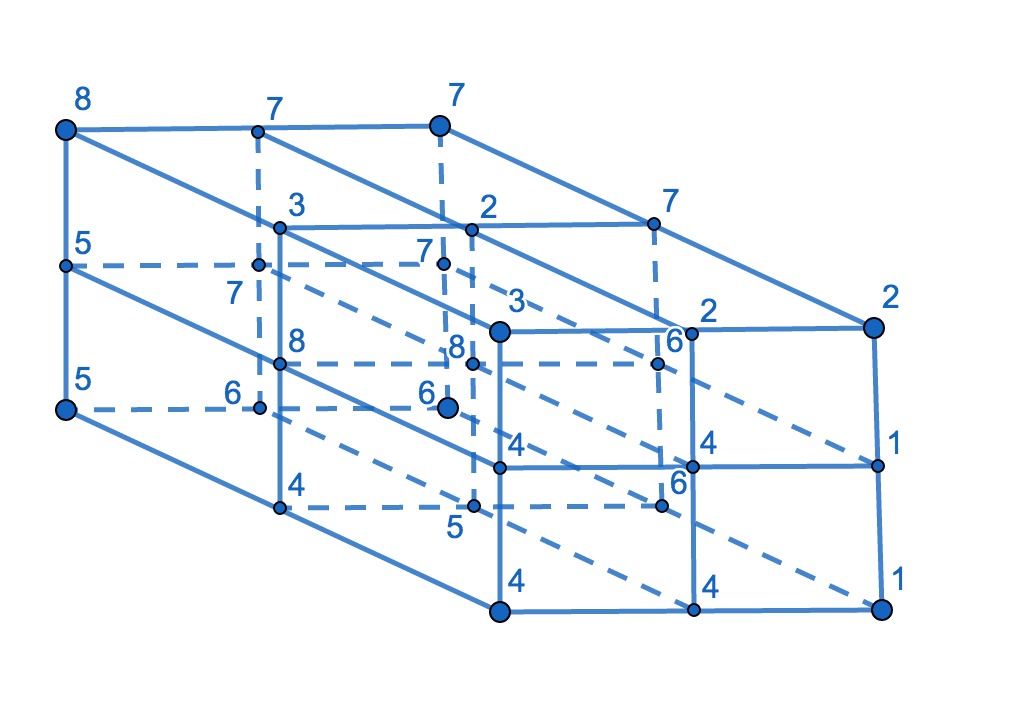}
	\caption{We label the decomposition, ensuring that condition (i) and (ii) are satisfied. The realization is formed from the cube to itself.}
	\label{fig:labeling}
\end{figure}

\begin{figure}[h]
	\includegraphics[width=.5\textwidth]{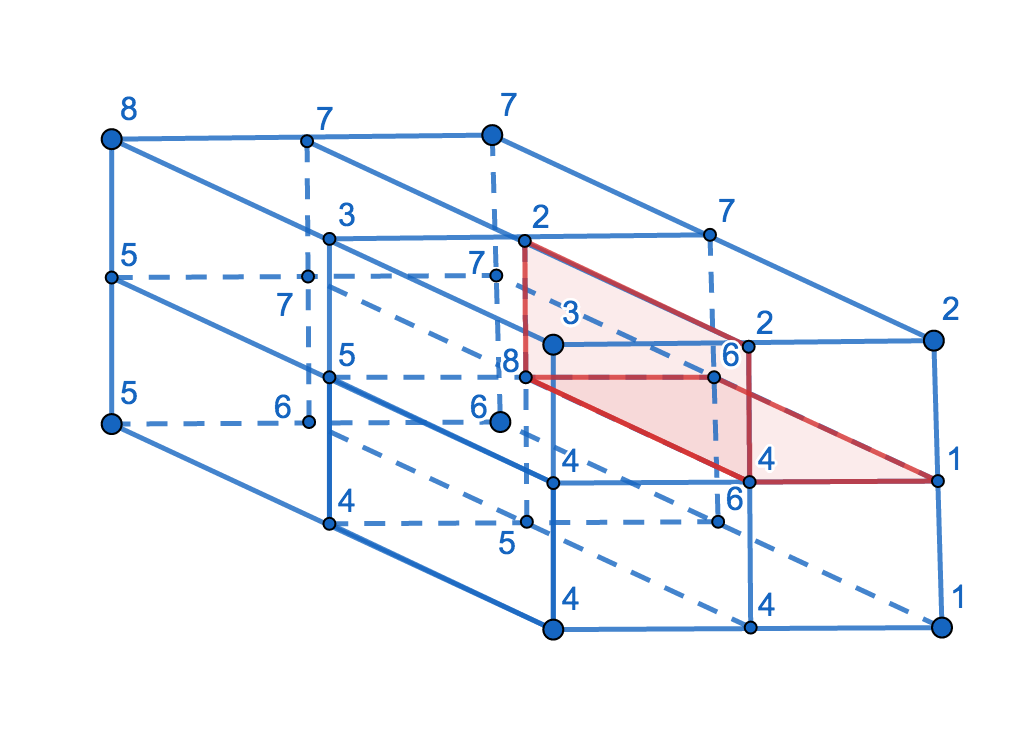}
	\caption{The following are the only problematic $(n-1)$-faces which don't satisfy condition (iii).}
	\label{fig:problematicface}
\end{figure}

\begin{figure}[h]
	\includegraphics[width=.5\textwidth]{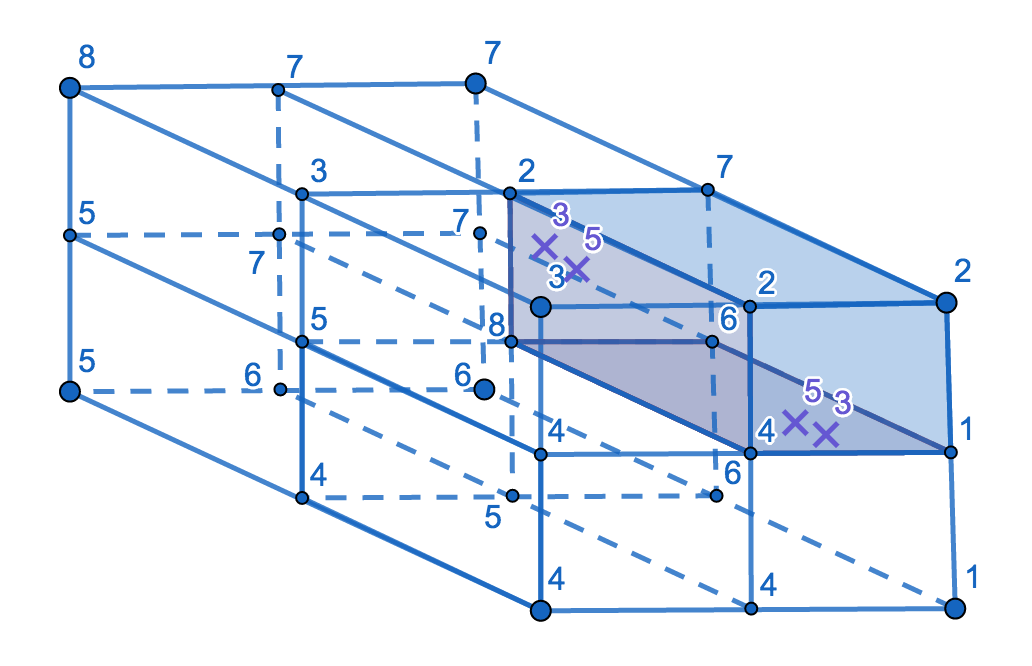}
	\caption{We fix the faces and a completely labeled cube appears. Notice, for the face labeled \{2,2,8,4\} adding a $3$ alone would have also been enough to satisfy condition  (iii), but either way a completely labeled cube appears.}
	\label{fig:addingvertexandCL}
\end{figure}

\clearpage

\bibliographystyle{alpha}

\bibliography{kakutani_biblio}

\end{document}